\documentclass[a4paper,12pt]{article}

\usepackage[utf8]{inputenc}
\usepackage[english]{babel}
\usepackage[hmargin=2.5cm,vmargin=3cm]{geometry}
\usepackage{amsmath,amssymb,amsthm,mathtools}
\usepackage{graphicx}
\usepackage{booktabs,array}
\usepackage{enumitem}
\usepackage{float}
\usepackage{subcaption}
\usepackage{xcolor}
\usepackage{titlesec}
\usepackage{tikz}
\usepackage{authblk}
\usetikzlibrary{arrows.meta,calc,positioning}
\usepackage[colorlinks=true,linkcolor=blue,citecolor=blue]{hyperref}
\usepackage[nameinlink]{cleveref}
\usepackage[square,sort,comma,numbers]{natbib}

\allowdisplaybreaks[2]
\usepackage{pgfplots}
\pgfplotsset{compat=1.18}
\theoremstyle{plain}
\newtheorem{thm}{Theorem}[section]
\newtheorem{theorem}[thm]{Theorem}
\newtheorem{lemma}[thm]{Lemma}
\newtheorem{corollary}[thm]{Corollary}

\newtheorem{conjecture}[thm]{Conjecture}

\newtheorem{remark}[thm]{Remark}

\tikzset{
  vertex/.style={circle,draw,thick,minimum size=5mm,inner sep=0pt},
  deleted/.style={circle,draw,thick,dashed,minimum size=5mm,inner sep=0pt},
  edge/.style={thick}
}
\providecommand{\keywordsname}{Keywords}
\newenvironment{keywords}
  {\par\medskip\noindent\textbf{\keywordsname:}\ }
  {\par\medskip}
\newcommand{\Spec}{\operatorname{Spec}}
\newcommand{\mad}{\operatorname{mad}}
\title{Sharp Upper Bounds for the Median Eigenvalues of Graphs     \thanks{This work is partly supported by the National Natural Science Foundation of China (No.12371354,
W2521102), the Montenegrin-Chinese Science and Technology Cooperation Project (No.4-3) and
the Science and Technology Commission of Shanghai Municipality (No.25LN3200600).}}

\author[a]{Zhengbo Chen}
\author[b]{Yuzhenni Wang}
\author[a]{Xiao-Dong Zhang}

\affil[a]{School of Mathematical Sciences, MOE-LSC, SHL-MAC,
Shanghai Jiao Tong University, Shanghai 200240, China}

\affil[b]{School of Statistics and Data Sciences,
Shanghai University of International Business and Economics,
Shanghai 201620, China}

\usepackage{fancyhdr}

\date{}
\begin{document}
\maketitle
\renewcommand{\thefootnote}{}

	\footnote{Email addresses:
    \texttt{czb911@sjtu.edu.cn} (Z. Chen);
    \texttt{wangyuzhenni@suibe.edu.cn} (Corresponding author: Y. Wang);
    \texttt{xiaodong@sjtu.edu.cn} (X.-D. Zhang)}

\begin{abstract}
Let $\lambda_1\geq\lambda_2\geq\cdots\geq\lambda_n$ be the eigenvalues of a simple graph $G$ of order $n$.
The HL-index of $G$ is defined by $R(G)=\max\{|\lambda_h|,|\lambda_\ell|\}$ with $h=\lfloor(n+1)/2\rfloor$ and $\ell=\lceil(n+1)/2\rceil$. 
In this paper, we prove that if $G$ is $ K_4$-minor-free or $ K _ {2,3} $-minor-free, then  $R(G)\leq\sqrt{5}-1$ with equality attained by an infinite family of outerplanar graphs. 
Moreover, we show that 
$R(G)\leq\sqrt{d-2}$ for triangle-free graphs with maximum degree at most $d$ and average degree at most $(d-2)(d^2-2d+2)/(d^2-3d+5)$.
\end{abstract}
\begin{keywords}
Median eigenvalue; HL-index; $K_4$-minor-free graphs; $K_{2,3}$-minor-free graphs; Maximum degree; Average degree
\end{keywords}
{\it AMS Classification:} 05C50; 05C83; 05C07

\section{Introduction}
In mathematical chemistry,  the HOMO--LUMO separation, which is the gap between the Highest Occupied Molecular Orbital(HOMO) and Lowest Unoccupied Molecular Orbital (LUMO), is linearly related to the median eigenvalues of a graph.
Fowler and Pisanski \citep{FowlerPisanski2010Chemical,FowlerPisanski2010Fullerenes}
introduced the notion of the HL-index of a graph (see also Jakli\u{c} et al. \citep{JaklicFowlerPisanski2012}).
Mohar\citep{Mohar2015HL,Mohar2016BipartiteSubcubic,Mohar2013BipartitePlanar}  systematically studied the HL-index of graphs with bounded maximum degree and the associated median eigenvalue problem. Since then, median eigenvalues have attracted considerable attention, particularly their bounds and connections with graph structure, owing to their significance in spectral graph theory and mathematical chemistry.

Let $G=(V(G), E(G))$ be a simple graph of order $n$ with vertex set $V(G)$ and edge set $E(G)$. 
Let $A(G)$ denote its adjacency matrix. The characteristic polynomial of $G$ is defined by $ \chi_G(t)=\det\bigl(tI-A(G)\bigr). $ Its roots, counted with multiplicities, are the adjacency eigenvalues of $G$. We order them as $\lambda_1(G)\geq\lambda_2(G)\geq\cdots\geq\lambda_n(G).$
The HL-index of $G$ is defined as
\[R(G)=\max\{|\lambda_h(G)|,|\lambda_\ell(G)|\},
\]
where $h=\left\lfloor\frac{n+1}{2}\right\rfloor$ and $\ell=\left\lceil\frac{n+1}{2}\right\rceil.$
Note that $h=\ell$ when $n$ is odd.

The degree of a vertex $v$ in a graph $G$, denoted by $d_G(v)$, is defined as the number of edges incident to $v$. 
For a nonempty graph $G$, the maximum degree of $G$ is defined by $\Delta(G)=\max_{v\in V(G)} d_G(v)$, and 
the average degree of $G$ is defined by $\overline d(G)=2|E(G)|/|V(G)|.$
The maximum average degree of $G$ is defined by
\[
\mad(G)=\max_{\varnothing\neq H\subseteq G}
\frac{2|E(H)|}{|V(H)|}.
\] 
A graph is called  {\it subcubic} if its maximum degree is at most 3. A graph $H$ is a {\it minor} (or {\it $H$-minor}) of a graph $G$ if a copy of $H$ can be obtained from $G$ 
by deleting vertices and edges or contracting edges of $G$. 
A graph is {\it $H$-minor-free} if $H$ is not a minor of it.

Mohar \citep{Mohar2015HL} proved that every subcubic graph satisfies
$R(G)\leq\sqrt2$. Furthermore, Mohar proposed the following conjecture.
\begin{conjecture}\citep{Mohar2015HL}\label{con1}
For every subcubic planar graph $G$, $R(G)\le 1$.
\end{conjecture}
Mohar  \citep{Mohar2013BipartitePlanar} confirmed that Conjecture \ref{con1} holds for  bipartite subcubic planar graphs. Later,  Mohar \citep{Mohar2016BipartiteSubcubic} proved that $R(G)\le 1$ holds for every bipartite subcubic graph $G$ except the Heawood graph, whose median eigenvalues are $\pm \sqrt{2}$.  Benediktovich \cite{Benediktovich2014} confirmed Conjecture \ref{con1} for subcubic outerplanar graphs. 
Wang and Zhang \citep{WangZhang2024} confirmed Conjecture \ref{con1} for every $K_4$-minor-free subcubic graph and for every subcubic graph containing $K_{2,3}$ as a subgraph. Chen, Wang, and Zhang \citep{ChenWangZhang2025} confirmed Conjecture \ref{con1} for every graph of girth at least $8$, and proved $R(G)\le 1$ for every subcubic graph with maximum average degree less than 
$\frac{44}{17}$. More recently, Acharya, Jeter, and Jiang \citep{AcharyaJeterJiang2025} completely confirmed Conjecture \ref{con1}. In fact, they proved the stronger result: every connected subcubic graph other than the Heawood graph satisfies $R(G)\leq1$.

It is natural to consider graphs with maximum degree $d$ for a general integer $d$.
Mohar \citep{Mohar2015HL} proved that $R(G)\leq\sqrt d$ for every graph $G$ with maximum degree $d$. 
Furthermore, Mohar proposed the following conjecture.
\begin{conjecture}\citep{Mohar2015HL}\label{con2}
For every graph $G$ with maximum degree $d$, $R(G)\le \sqrt{d-1}$.
\end{conjecture}
Mohar \citep{Mohar2015HL} also pointed out that the HL-index of the incidence graph of a projective plane of order $d-1$ is equal to $\sqrt{d-1}$; hence the bound would be optimal whenever $d-1$ is a prime power. 
Acharya, Jiang, and Zhang \citep{AcharyaJiangZhang2026} proved that
$\lambda_h(G)\leq\sqrt{d-1}$ for every graph of maximum degree at most $d$. Furthermore, they \citep{AcharyaJiangZhang2026} confirmed Conjecture \ref{con2} for all triangle-free graphs. Moreover, they \citep{AcharyaJiangZhang2026} proved that $R(G)\leq\sqrt{\overline d(G)}$ for every graph $G$ with average degree $\overline d(G)$.

Motivated by the above conjectures and results, we study the relationship between graph structure and the HL-index in this paper.
The following is the first main result of this work.

\begin{theorem}
\label{thm:intro-K4}
Let $G$ be a $K_4$-minor-free graph or a $K_{2,3}$-minor-free graph. Then
$R(G)\leq\sqrt5-1$ with equality attained by an infinite family of outerplanar graphs. 
\end{theorem}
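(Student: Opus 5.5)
We will use the standard interlacing tool behind Mohar's approach: if $S\subseteq V(G)$ and $G-S$ has many small components, then the median eigenvalues of $G$ are controlled by those of $G-S$. Concretely, by Cauchy interlacing, deleting a set $S$ of $s$ vertices gives $\lambda_{i+s}(G)\le\lambda_i(G-S)$ and $\lambda_i(G)\ge\lambda_i(G-S)$. Suppose $G-S$ is a disjoint union of components each having spectral radius (or at least all but few eigenvalues) at most $c=\sqrt5-1$ in absolute value, and at least $s$ more eigenvalues of $G-S$ lie in $[-c,c]$ on each side than are needed. Then $\lambda_h(G)\le c$ and $\lambda_\ell(G)\ge -c$. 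The target constant $\sqrt5-1\approx1.236$ suggests the relevant small pieces: $\sqrt5-1$ is the largest root of $x^2+2x-4$. We will find the extremal outerplanar family by gluing triangles or fans. A natural candidate is copies of a small outerplanar gadget (for instance $K_1\vee P_3$ minus something) attached to a path or cycle, whose characteristic polynomial contains the factor $x^2\pm 2x-4$ with multiplicity large enough to occupy the median positions. We first compute this family and verify equality, which fixes the constant.

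For the upper bound, we use that $K_4$-minor-free graphs and $K_{2,3}$-minor-free graphs are $2$-degenerate with $|E|\le 2n-3$. Both classes are closed under minors, so we may argue by minimal counterexample. Let $G$ be a minimal graph in the class with $\lambda_h(G)>c$ (the case $\lambda_\ell<-c$ is symmetric after applying the same argument to $-A$, since all interlacing steps are symmetric). The key reduction lemma is an eigenvalue "removal" statement. If $G$ contains a configuration $H$ (a pendant vertex, a degree-2 vertex on a path of degree-2 vertices, a triangle hanging at a cut vertex, and so on) such that deleting a set $S$ of $s$ vertices leaves $G-S=G'\cup H'$, and $H'$ has at least $s$ eigenvalues in $[-c,c]$ in excess on both sides, then $R(G)\le\max(R(G'),c)$. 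Pendant edges ($K_2$, eigenvalues $\pm1$) and isolated vertices give the basic case. This is the classical Mohar lemma: if $G-S$ has at least $s$ more eigenvalues in $[-c,c]$ appropriately, the median is preserved.

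The structural step is to show that every graph in the class that is not reducible contains one of a finite list of reducible configurations. For $K_4$-minor-free graphs (series–parallel) we use an ear/SP-decomposition. A leaf block of the block tree, or a deepest nested ear, yields either a vertex of degree $\le1$, two adjacent degree-$2$ vertices, or a degree-2 vertex whose neighbours are adjacent, attached in a bounded pattern. For $K_{2,3}$-minor-free graphs, the $2$-connected members are outerplanar or $K_4$. So after handling $K_4$ blocks separately, we reduce to outerplanar blocks and use the existence of two degree-2 vertices on the outer face together with a nearby chord structure. For each configuration we compute the spectrum of the small piece $H'$ and check that its eigenvalues fit within $[-c,c]$ in the required count. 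The threshold $\sqrt5-1$ arises exactly at the worst configuration, a degree-2 vertex in a triangle fan.

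The main obstacle will be this finite but delicate case analysis. We must ensure that each unavoidable configuration yields enough eigenvalues in $[-c,c]$ after deleting few vertices, in particular for dense fan-like outerplanar pieces where only the tight constant works. I would try first to use weighted counting of the kind in Chen–Wang–Zhang (discharging with $\mad$), replacing $\mad<44/17$ by the bound $2n-3$. If local reducibility fails for some configuration, I will fall back on a global argument: choose $S$ as a maximal set forcing $G-S$ to be a disjoint union of stars, paths $P_3$ and triangles, all of spectral radius $\le 2$. Then I will refine by counting eigenvalues of these pieces in $[-c,c]$.
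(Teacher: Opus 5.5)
Your proposal is a plan rather than a proof. Its one concrete ingredient fails exactly where the difficulty lies, and the parity bookkeeping that drives the actual argument is missing.

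Take your key configuration: a $2$-vertex $y$ whose neighbours $x,x'$ are adjacent. The natural reduction deletes $S=\{x,x'\}$, and minimality then gives only $n_c^\pm(G)\le 2+\lfloor (n-4)/2\rfloor=\lfloor n/2\rfloor$, with $c=\sqrt5-1$. This is one too many when $n$ is even. It is precisely the case of a $2$-cut leaving two odd components, which Theorem~\ref{lem:two-cut-reduction} deliberately omits.

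The tight graphs make this fatal. Consider $T_m$ (a cycle $x_1\cdots x_m$ with a triangle $x_iy_ix_{i+1}$ on each edge), and $T_m$ plus non-crossing chords among the $x_i$. These graphs are outerplanar and of even order. They have no vertex of degree at most $1$ and no adjacent $2$-vertices, and they consist entirely of your configuration. So your list offers nothing reducible for them.

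In fact, for every spanning supergraph $G$ of $T_m$ and every nonempty $S$, one checks that $|S|+\sum_C\lfloor(|C|-1)/2\rfloor\ge m$, summing over the components $C$ of $G-S$. Hence no reduction that bounds the pieces only through minimality can close the count on such graphs; explicit spectra of specific pieces are needed. Separately, $K_{2,k}$ with $k\ge3$ contains none of your configurations, so the list is not unavoidable either.

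The paper supplies the missing global, parity-sensitive structure for a minimal counterexample:
\begin{itemize}
\item it is $2$-connected;
\item a longest cycle is Hamiltonian, since otherwise some $2$-cut leaves three components;
\item the ends of every chord form a $2$-cut, since otherwise there is a $K_4$-subdivision;
\item $n$ is even, since otherwise any chord gives a reducible $2$-cut;
\item chords join vertices of equal index parity, since otherwise their ends leave two even components;
\item only one parity class carries chords.
\end{itemize}
This forces a Hamilton cycle $x_1y_1\cdots x_my_m$ with all $d(y_i)=2$. Then a single global deletion of $m-2$ of the $x_i$ leaves $P_5$ or a double star (each with one eigenvalue above $1$ and one below $-1$) plus isolated vertices. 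This gives $R(G)\le1$ unless $G\cong T_m$ (Lemma~\ref{lem:alternating-Hamilton-cycle}).

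The equality half is also missing. You never exhibit the family, and the heuristic that a factor $x^2\pm2x-4$ must occur ``with multiplicity large enough to occupy the median positions'' is not the mechanism. For $T_m$, let $C$ be the adjacency matrix of the $x$-cycle and $B$ the $x$--$y$ incidence matrix, so $BB^{\mathsf T}=2I+C$. A Schur complement then gives $\chi_{T_m}(t)=\prod_k\bigl(t^2-\theta_kt-(2+\theta_k)\bigr)$, where $\theta_k=2\cos(2\pi k/m)$ are the eigenvalues of $C$. The factor $t^2-2t-4$ (from $\theta=2$) occurs only once. Because both roots increase with $\theta$, its negative root $1-\sqrt5$ is exactly $\lambda_{m+1}$, while $\lambda_m\le(\sqrt5-1)/2$. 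The tight side is thus the negative one, so singling out $\lambda_h>c$ and appealing to symmetry via $-A$ is misleading; these spectra are not symmetric, though working with $R$ of each piece avoids the issue.

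Smaller points:
\begin{itemize}
\item $K_{2,3}$-minor-free graphs are neither $2$-degenerate nor bounded by $2n-3$ edges; $K_4$ violates both.
\item Your reduction lemma ``$R(G)\le\max(R(G'),c)$'' omits the floor bookkeeping of Lemma~\ref{lem:general-deletion}, which is exactly where parity enters.
\item The Chen--Wang--Zhang discharging relies on maximum degree $3$ and $\operatorname{mad}<44/17$, neither of which is available here.
\item $P_3$, $K_{1,k}$ ($k\ge2$) and $K_3$ all have eigenvalues outside $[-c,c]$, so your fallback decomposition could help only through $R(\cdot)\le c$ and floor counts.
\end{itemize}
Your reduction of the $K_{2,3}$ case to blocks that are outerplanar or $K_4$ (with $R(K_4)=1$) does match the paper.
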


Theorem~\ref{thm:intro-K4} extends the result of Wang and Zhang
\citep{WangZhang2024} from subcubic $K_4$-minor-free graphs to all the
 $K_4$-minor-free graphs, without any restriction on the maximum degree.
 Our second main result concerns graphs with bounded maximum degree.
\begin{theorem}
\label{thm:intro-mad-subquartic}
Let $G$ be a simple graph with 
$\Delta(G)\leq4 $
and
$\operatorname{mad}(G)\leq\frac83.$
Then $R(G)\leq\sqrt2.$
\end{theorem}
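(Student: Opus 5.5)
We follow the standard interlacing framework of Mohar, as refined by Acharya, Jiang and Zhang. It suffices to prove $\lambda_h(G)\le\sqrt2$ and $\lambda_\ell(G)\ge-\sqrt2$.

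\emph{Upper median eigenvalue.} Note that $\Delta\le4$ gives $d-1=3$, so the general bound $\lambda_h(G)\le\sqrt3$ from \citep{AcharyaJiangZhang2026} is too weak, and the bound $\sqrt{\overline d(G)}\le\sqrt{8/3}$ is also too weak. Instead we aim for a partition-type argument. We seek a set $S\subseteq V(G)$ with $|S|\ge\lceil n/2\rceil$ such that $G[S]$ has all eigenvalues at most $\sqrt2$. A second option is a set $T$ with $|T|\ge\lfloor n/2\rfloor+1$ on which a suitable test space gives the bound. Cauchy interlacing then yields $\lambda_h(G)\le\lambda_1(G[S])\le\sqrt2$. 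The natural target is a set $S$ inducing a subgraph whose components are paths and small pieces with spectral radius $\le\sqrt2$, such as $P_1,P_2,P_3,K_{1,2}$. For the lower median eigenvalue we argue symmetrically. We want a large set $S$ with $\lambda_{\min}(G[S])\ge-\sqrt2$; equivalently, we want a matching-like structure, or a set where $G[S]$ is a disjoint union of stars $K_{1,r}$ with $r\le2$. Using $\lambda_\ell(G)\ge\lambda_{\min}(G[S])$ when $|S|\ge n-\ell+1$ then gives the result.

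\emph{Existence of the large set.} We argue by a minimal counterexample and discharging, in the style of \citep{ChenWangZhang2025}. Take $G$ minimal with $\Delta\le4$, $\mad\le8/3$ and $R(G)>\sqrt2$. The reducible configurations are the following:
\begin{itemize}
\item vertices of degree $\le1$;
\item two adjacent $2$-vertices, or a path of $2$-vertices;
\item a $2$-vertex adjacent to a $3$-vertex in certain patterns;
\item a $3$-vertex with two $2$-neighbours.
\end{itemize}
Each is removed or contracted in a way that preserves the hypotheses, since $\mad$ is monotone under subgraphs. We then show that the median eigenvalue bound lifts back. The lifting uses two tools: interlacing for deleting two vertices, which shifts both $h$ and $\ell$ by one, and the Schur complement / pendant-path identity $\chi_G=t\chi_{G-v}-\chi_{G-v-u}$ for a pendant vertex. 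Discharging then runs as follows. Give each vertex the charge $d(v)-8/3$, so that the total charge is $\le0$. Each $2$-vertex receives $1/3$ from each neighbour. A $3$-vertex with at most one $2$-neighbour keeps a charge $\ge0$, and a $4$-vertex with at most four $2$-neighbours keeps $4-8/3-4/3=0$. Every vertex thus ends nonnegative, and a $2$-vertex gains to exactly $0$. Equality throughout is forced, so the extremal configuration must be analyzed directly, or we obtain strict positivity and a contradiction.

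\emph{Main obstacle.} The hardest step is the reducibility proofs: showing that deleting a local configuration $X$ with $|X|$ even, or $|X|=2k$ plus careful parity bookkeeping, preserves $R\le\sqrt2$. This is delicate because interlacing only controls eigenvalue indices up to $|X|$ shifts. We would first try configurations removable by deleting exactly two vertices $\{u,v\}$ together with an explicit eigenvector argument. If $\lambda_h(G)>\sqrt2$, then the subspace spanned by the top $h$ eigenvectors meets the space of vectors vanishing on the neighbourhood constraints. Restricting to $G-\{u,v\}$ then gives $h-1$ eigenvalues above $\sqrt2$, which contradicts minimality. The $4$-vertex configurations with several $2$-neighbours are where the $\Delta\le4$ hypothesis is really used, and we expect they will need a hand-computed local spectral bound.
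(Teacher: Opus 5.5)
Your discharging computation is correct, but it cannot end in a contradiction. The hypothesis is $\mad(G)\le 8/3$, not $<8/3$, so it only shows that a minimal counterexample is \emph{tight}: every $3$-vertex has exactly one $2$-neighbour and every $4$-vertex has four $2$-neighbours.

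Tight graphs exist and avoid every configuration you actually specify. Two examples are $K_4$ with two independent edges subdivided, and the prism over $C_k$ with every rung subdivided. Both have $\mad=8/3$, minimum degree $2$, no adjacent $2$-vertices and no $3$-vertex with two $2$-neighbours. The second has no $4$-vertices at all, so extra local reductions at $4$-vertices would not help. Thus ``strict positivity and a contradiction'' is unavailable. Your sentence ``the extremal configuration must be analyzed directly'' is where the whole content of the theorem lies, and your proposal never constructs the large set announced in its first paragraph. That first paragraph does have the right target: the paper ends with exactly such a set, leaving components $K_1,P_2,P_3$.

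The missing idea is a global construction in the tight case:
\begin{enumerate}[label=\rm(\arabic*)]
\item Since $4$-vertices have only $2$-neighbours, every $3$-vertex has exactly two $3$-neighbours, so $G[V_3]$ is a disjoint union of cycles.
\item Delete $V_4$ together with a minimum vertex cover of each cycle of $G[V_3]$.
\item As $V_2$ is independent, each surviving $3$-vertex keeps only its $2$-neighbour, and each $2$-vertex keeps at most two neighbours. So the remainder has components $K_1,P_2,P_3$ only, all with spectrum in $[-\sqrt2,\sqrt2]$.
\item Write $n_3=2r$, $n_2=r+2n_4$ and $t=r+n_4$. Then $n=3t$, and the deleted set has size $t+q/2$, where $q$ is the number of odd cycles.
\item Since $q$ is even and $3q\le 2t$, we get $t+q/2\le\lfloor (n-1)/2\rfloor$. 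Lemma~\ref{lem:count-interlacing} then bounds $n^{\pm}_{\sqrt2}(G)$, handling both median eigenvalues at once.
\end{enumerate}

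\textbf{Interlacing thresholds.} Your thresholds are off by one for even $n$. Both $\lambda_h(G)\le\lambda_1(G[S])$ and $\lambda_\ell(G)\ge\lambda_{\min}(G[S])$ require $|S|\ge\ell=\lceil (n+1)/2\rceil$, that is, at most $\lfloor (n-1)/2\rfloor$ deleted vertices. With $|S|=\lceil n/2\rceil$ or $|S|=n-\ell+1$ you only get $\lambda_{h+1}(G)\le\sqrt2$, respectively $\lambda_h(G)\ge-\sqrt2$. In the tight count above, it is the strict inequality $q<t$ that pays for this unit.

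\textbf{Lifting.} The lifting you sketch does not work. Deleting $u,v$ gives only $\lambda_h(G)\le\lambda_{h-2}(G-\{u,v\})$. Intersecting the top-$h$ eigenspace with the codimension-$2$ space of vectors vanishing at $u,v$ yields only $h-2$ eigenvalues above $\sqrt2$ in $G-\{u,v\}$, which is consistent with minimality. Contraction is not allowed either: it does not produce a subgraph and can raise $\mad$, since a subdivided $K_4$ contracts to $K_4$.

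What works is to delete the \emph{outer} neighbours $S$ of the configuration: $|S|\le 2$ for two adjacent $2$-vertices, $|S|\le 3$ for a $3$-vertex with two $2$-neighbours. The configuration then becomes a $P_2$ or $P_3$ component. Applying minimality to the rest $H$ gives $n^{\pm}_{\sqrt2}(G)\le |S|+\lfloor (|H|-1)/2\rfloor\le\lfloor (n-1)/2\rfloor$.

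\textbf{Small orders.} This argument needs $H\neq\varnothing$, so small orders must be checked separately. For $K_4$ with two adjacent edges subdivided ($n=6$), the count only gives $3>2$. This is why the paper settles $n\le 6$ via the $K_4$-minor-free theorem and a direct computation for the three $6$-vertex subdivisions of $K_4$.
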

Theorem~\ref{thm:intro-mad-subquartic} extends the result of Chen, Wang, and Zhang \citep{ChenWangZhang2025} from subcubic graphs to graphs with maximum degree at most $4$. Motivated by the proof of \cite[Theorem~1.1]{AcharyaJiangZhang2026}, which shows that
$\lambda_h(G)\leq \sqrt{d-1}$ for every graph $G$ of maximum degree at most $d$, we use a similar approach to establish another main result.

\begin{theorem}
\label{thm:intro-average-degree}
Let $d\geq3$ be a positive integer, and let $G$ be a simple graph satisfying
$\Delta(G)\leq d$ 
and
$\overline d(G)\leq\tau_d$ with 
$\tau_d=(d-2)(d^2-2d+2)/(d^2-3d+5).$ 
Then we have (i) $\lambda_h(G)\leq\sqrt{d-2}$; (ii) if $G$ is additionally triangle-free, then
$R(G)\leq\sqrt{d-2}.$
\end{theorem}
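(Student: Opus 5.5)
Following the approach of \cite[Theorem~1.1]{AcharyaJiangZhang2026}, we bound $\lambda_h(G)$ by exhibiting a symmetric matrix $M$ with $A(G)\preceq$-type control on a large subspace. Concretely, we aim to show that $A^2-(d-2)I$ has at most $n-h$ positive eigenvalues, or more precisely that $A(G)$ has at most $h-1$ eigenvalues exceeding $\sqrt{d-2}$. By Cauchy interlacing (Courant--Fischer), it suffices to find a subspace $W$ with $\dim W\ge n-h+1$ on which $x^{\top}Ax\le \sqrt{d-2}\,\|x\|^2$. Equivalently, we write
\[
\sqrt{d-2}\,I - A \;=\; P + N,
\]
with $P\succeq 0$ and $N$ of rank at most $h-1$; then $\sqrt{d-2}I-A$ has at most $h-1$ negative eigenvalues. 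The natural source of $P$ is a sum of edge terms: for each edge $uv$ we take the rank-one positive semidefinite form $\bigl(\alpha x_u-\alpha^{-1}x_v\bigr)^2$-type contributions, weighted so the diagonal consumed at each vertex is at most $\sqrt{d-2}$. For a vertex of degree $d$ the diagonal budget $\sqrt{d-2}$ is too small to cover all $d$ incident edges symmetrically (that would need $d/2$ per vertex times $1$), so the deficit must be absorbed by a low-rank correction $N$, whose rank we count in terms of the number of edges or high-degree vertices.

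The key steps, in order, are: (1) orient or weight each edge and assign to each vertex $v$ a diagonal charge $c_v$ so that $\sum_{uv\in E}$ of the edge PSD pieces yields off-diagonal $-A$ and diagonal $\operatorname{diag}(c_v)$; (2) compare $c_v$ with $\sqrt{d-2}$ and put the excess $\operatorname{diag}(c_v-\sqrt{d-2})^{+}$ together with any leftover edge pieces into $N$; (3) bound $\operatorname{rank} N$ by a linear combination of $n$ and $|E(G)|$, of the shape $\operatorname{rank}N\le a\,|E|-b\,n$; (4) impose $\operatorname{rank}N\le h-1\approx n/2$ and solve for the threshold on $\overline d(G)=2|E|/n$. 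The specific constant $\tau_d=(d-2)(d^2-2d+2)/(d^2-3d+5)$ should emerge from optimizing the edge weights in step (1) against the degree cap $\Delta\le d$; I would first try weights depending only on $d$ (e.g.\ each edge split with parameter $t$ chosen so that degree-$d$ vertices overshoot minimally) and then choose $t$ to make the inequality in (4) exactly $\overline d\le\tau_d$. If the rank count from a single uniform weighting is too crude, I would instead charge excess only at vertices of degree $\ge d-1$ and use a discharging/counting argument relating the number of such vertices to $|E|$ and $\Delta\le d$.

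For (ii), when $G$ is triangle-free, Acharya--Jiang--Zhang's argument for the lower median eigenvalue uses that $-A$ restricted to neighborhoods behaves like $A$ on a bipartite-like structure: triangle-freeness gives that each neighborhood $N(v)$ is independent, so the same decomposition applied to $\sqrt{d-2}I+A$ (with sign-flipped edge pieces, which is exactly where triangles would break positivity of the local star forms $\bigl(\sqrt{d-2}\,x_v+\sum_{u\in N(v)}x_u\bigr)^2$-type terms) shows $\lambda_\ell(G)\ge-\sqrt{d-2}$, i.e.\ $-A$ has at most $n-\ell$ eigenvalues above $\sqrt{d-2}$. Combined with (i) this gives $R(G)\le\sqrt{d-2}$.

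The main obstacle I expect is step (3)/(4): getting a rank bound for $N$ sharp enough to reproduce exactly $\tau_d$ rather than a weaker constant. I would handle it by computing the extremal configuration (all degrees in $\{d, \text{small}\}$) to reverse-engineer the weights, checking that equality in the rank count corresponds to $\overline d=\tau_d$.
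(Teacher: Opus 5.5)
Your splitting $\sqrt{d-2}\,I-A=P+N$ with $P\succeq0$ and $\operatorname{rank}N\le h-1$ is a valid reformulation: by Courant--Fischer it is equivalent to $\lambda_h(G)\le\sqrt{d-2}$. But it is only a reformulation. The edge weights, the charges $c_v$, and the rank bound in step (3) are never specified, and they are the whole content of the theorem.

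Moreover, the concrete instantiation you describe cannot work. Suppose $N$ consists only of the diagonal excess at the set $S$ of overcharged vertices. Then $\sqrt{d-2}\,I-A$ is positive semidefinite on $\{x:x_S=0\}$, so the scheme can only ever certify $\lambda_1(G-S)\le\sqrt{d-2}$ for some $|S|\le h-1$. Take $d=7$ and $G=K_8\cup 2K_1$. Then $\Delta(G)=7$, $\overline d(G)=5.6\le\tau_7=185/33$, $n=10$ and $h=5$. Deleting at most $4$ vertices leaves a clique $K_r$ with $r\ge4$, so $\lambda_1=r-1\ge3>\sqrt5$. Yet $\lambda_5(G)=-1$, so the theorem holds here.

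The correction that actually works is $N=-J$ on the clique, a global rank-one term, not something produced by charging edges to vertices. So the off-diagonal part of $N$ would have to do essential, non-local work. Your plan gives no mechanism for this and no way a bound $\operatorname{rank}N\le a|E|-bn$ would arise. There is also no reason for $\tau_d$ to ``emerge from optimizing edge weights.''

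Your account of (ii) misplaces the role of triangle-freeness. The pieces $(\alpha x_u-\alpha^{-1}x_v)^2$ and $(\alpha x_u+\alpha^{-1}x_v)^2$ have the same diagonal, so your scheme cannot see the sign of $A$ and never uses triangle-freeness. In $\bigl(\sqrt{d-2}\,x_v+\sum_{u\in N(v)}x_u\bigr)^2$, the cross terms $x_ux_{u'}$ with $u,u'\in N(v)$ are over non-edges exactly when $G$ is triangle-free. So they would have to be cancelled; triangle-freeness does not help with them.

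The missing idea is the one the paper uses, following Acharya--Jiang--Zhang: a polynomial test function combined with the first four spectral moments.
\begin{itemize}
\item Set $a=\sqrt{d-2}$ and $r=d+2a^2/(d+2a)$. Then $f(x)=(r-x)(x+a)^2(x+d)$ is nonnegative on $[-d,d]$ and at least $M=f(a)=f(d)$ on $(a,d]$, strictly on $(a,d)$.
\item Expand $\sum_i f(\lambda_i)$ in $\operatorname{tr}A^k$ for $k\le4$. Use $\operatorname{tr}A^3\ge0$ (its coefficient is negative), $\operatorname{tr}A^4\ge2\sum_v d_G(v)^2-2m$, and $k^2\ge(2d-3)k-(d-2)(d-1)$ for integers $0\le k\le d$. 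This gives $\sum_i f(\lambda_i)\le2(d^2-3d+5)m+a^2\bigl(dr+2(d-1)\bigr)n$.
\item For $n=2h$, this bound is at most $hM$ exactly when $\overline d(G)\le\tau_d$; this is where $\tau_d$ comes from.
\item If $\lambda_h>a$, the top $h$ eigenvalues lie in $(a,d]$. They are not all equal to $d$, since the multiplicity of $d$ is at most $n/(d+1)<h$. Hence $\sum_i f(\lambda_i)>hM$, a contradiction.
\item Odd $n$ is handled by adding an isolated vertex and using interlacing.
\end{itemize}
In (ii), triangle-freeness is used only to get $\operatorname{tr}A^3=0$, so that the same bound holds for $\sum_i f(-\lambda_i)$.
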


Thus, under an explicit average-degree condition, the upper bound
$\sqrt{d-1}$ of Acharya, Jiang, and Zhang
\citep{AcharyaJiangZhang2026} is improved to $\sqrt{d-2}$. In
particular, when $d=4$, the condition
$\overline d(G)\leq20/9$ gives $\lambda_h(G)\leq\sqrt2$, and gives
$R(G)\leq\sqrt2$ when $G$ is triangle-free.

The rest of the paper is organized as follows. In the next section,  some technical preliminary results are provided. In Section 3, we prove Theorem \ref{thm:intro-K4}, and in Section 4 we prove Theorem \ref{thm:intro-mad-subquartic}. The proof of Theorem \ref{thm:intro-average-degree} is given in Section 5. 
\section{Preliminaries}
\label{sec:preliminaries}

Throughout the paper, $|G|$ denotes $|V(G)|$, and
$[k]=\{1,\ldots,k\}$ for every positive integer $k$. For two
vertex sets $A,B\subseteq V(G)$, let $e(A,B)$ denote the number of
edges with one end in $A$ and the other in $B$. 
For a graph $H$ and a real number $b>0$, define
\[
n_b^+(H)=|\{i:\lambda_i(H)>b\}|,
\qquad
n_b^-(H)=|\{i:\lambda_i(H)<-b\}|.
\]

First, we need the following well-known theorems in spectral graph theory (for example, see  \cite[pp. 17--18]{C2010}).
\begin{theorem}[Eigenvalue Interlacing Theorem]
\label{thm:interlacing}
Let $G$ be an $n$-vertex graph, let $S\subseteq V(G)$, and let 
$s=|S|$. Then, for every $1\leq i\leq n-s$,
\[
\lambda_i(G)
\geq
\lambda_i(G-S)
\geq
\lambda_{i+s}(G)
\quad\text{and}\quad
\lambda_{n-i+1}(G)
\leq
\lambda_{n-i+1}(G-S)
\leq
\lambda_{n-s-i+1}(G).
\]
\end{theorem}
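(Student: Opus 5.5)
The plan is the standard Courant--Fischer (min--max) argument, applied to the real symmetric matrix $A=A(G)$. Deleting a set $S$ of $s$ vertices produces the principal submatrix $B=A(G-S)$, indexed by $V\setminus S$. We write $e_v$ for the standard basis vectors of $\mathbb{R}^n$ and identify $\mathbb{R}^{n-s}$ with the coordinate subspace $W=\operatorname{span}\{e_v : v\notin S\}$. Then for every $x\in W$,
\[
x^{\top}Ax=x^{\top}Bx .
\]
Everything will come out of this identity together with the two variational characterizations
\[
\lambda_i(M)=\max_{\dim U=i}\ \min_{0\neq x\in U}\frac{x^{\top}Mx}{x^{\top}x},
\qquad
\lambda_i(M)=\min_{\dim U=m-i+1}\ \max_{0\neq x\in U}\frac{x^{\top}Mx}{x^{\top}x},
\]
where $M$ is a symmetric $m\times m$ matrix.

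\textbf{Upper bound.} To show $\lambda_i(G)\ge\lambda_i(G-S)$, take an $i$-dimensional subspace $U\subseteq W$ that realizes $\lambda_i(B)$ in the max--min formula. Because the Rayleigh quotients of $A$ and $B$ agree on $W$, the same $U$ is an admissible competitor in the max--min formula for $A$. Hence $\lambda_i(A)\ge\lambda_i(B)$.

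\textbf{Lower bound.} To show $\lambda_i(G-S)\ge\lambda_{i+s}(G)$, use the min--max form for $B$. Choose $U\subseteq W$ with $\dim U=(n-s)-i+1$ and $\max_{x\in U}R_B(x)=\lambda_i(B)$. As a subspace of $\mathbb{R}^n$, $U$ has dimension $n-(i+s)+1$, so it is admissible in the min--max formula for $\lambda_{i+s}(A)$. Therefore $\lambda_{i+s}(A)\le\max_{x\in U}R_A(x)=\lambda_i(B)$. The range $1\le i\le n-s$ guarantees that every index used here is valid.

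\textbf{Second chain.} The inequalities for the smallest eigenvalues follow by applying the first chain to $-A$ and its principal submatrix $-B$, using $\lambda_j(-M)=-\lambda_{m-j+1}(M)$. Applying the first chain with index $i$ gives
\[
-\lambda_{n-i+1}(A)\ \ge\ -\lambda_{n-s-i+1}(B)\ \ge\ -\lambda_{n-i-s+1}(A),
\]
that is,
\[
\lambda_{n-i+1}(A)\le\lambda_{n-s-i+1}(B)\le\lambda_{n-s-i+1}(A).
\]
The displayed statement writes the middle term as $\lambda_{n-i+1}(G-S)$. Since $G-S$ has only $n-s$ vertices, that index must be read as counting from the bottom of the spectrum of $G-S$, i.e.\ as its $i$-th smallest eigenvalue $\lambda_{n-s-i+1}(G-S)$. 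Under that reading the chain above is exactly the claimed one.

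The main obstacle is this index bookkeeping, not any genuinely hard step. One has to fix the convention for the middle term and check that all indices stay within $[1,n-s]$ and $[1,n]$. The reduction to $-A$ is where a misreading would silently break the argument.
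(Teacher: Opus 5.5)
Your proof is correct, and it is the standard Courant--Fischer proof of Cauchy interlacing; the paper gives no proof of its own and only cites the result from \cite[pp.~17--18]{C2010}, where the classical argument is this same Rayleigh-quotient, dimension-counting one. You were also right to repair the index: the middle term of the second chain must be the $i$-th smallest eigenvalue $\lambda_{n-s-i+1}(G-S)$, since the literal $\lambda_{n-i+1}(G-S)$ is undefined for $i\leq s$ and false in general otherwise, and passing to $-A$ is legitimate because your first-chain argument never used that $A$ is an adjacency matrix.
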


\begin{lemma}
\label{lem:count-characterization}
Let $H$ be a graph of order $m$, and let $b$ be a positive number.
Then $R(H)\leq b$ if and only if
\[
n_b^{\pm}(H)\leq\left\lfloor\frac{m-1}{2}\right\rfloor.
% \qquad\text{and}\qquad
% n_b^-(H)\leq\left\lfloor\frac{m-1}{2}\right\rfloor.
\]
\end{lemma}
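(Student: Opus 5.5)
The plan is to translate both conditions into statements about the two median indices. Put $k=\lfloor (m-1)/2\rfloor$. Then $h=\lfloor (m+1)/2\rfloor=k+1$ in both parities, and $\ell=\lceil (m+1)/2\rceil=m-k$: if $m$ is odd, $\ell=h=(m+1)/2=m-k$, and if $m$ is even, $\ell=m/2+1=m-k$. So the lemma amounts to the two equivalences
\[
\lambda_h(H)\le b \iff n_b^+(H)\le k,
\qquad
\lambda_\ell(H)\ge -b \iff n_b^-(H)\le k .
\]
The first handles $n_b^+$ and the second handles $n_b^-$.

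For the first equivalence, use that the eigenvalues are ordered nonincreasingly, so $\{i:\lambda_i(H)>b\}$ is an initial segment $\{1,\dots,n_b^+(H)\}$. Hence $n_b^+(H)\le k$ holds exactly when index $k+1=h$ lies outside that segment, i.e.\ when $\lambda_h(H)\le b$. The second works the same way: $\{i:\lambda_i(H)<-b\}$ is a terminal segment $\{m-n_b^-(H)+1,\dots,m\}$. So $n_b^-(H)\le k$ holds exactly when index $m-k=\ell$ is not in it, i.e.\ when $\lambda_\ell(H)\ge -b$.

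It remains to convert between $R(H)\le b$ and these one-sided bounds. Since $h\le \ell$, we have $\lambda_h\ge\lambda_\ell$. Hence $\lambda_h\le b$ and $\lambda_\ell\ge -b$ together give $-b\le \lambda_\ell\le\lambda_h\le b$, so $|\lambda_h|\le b$ and $|\lambda_\ell|\le b$. That is $R(H)\le b$. Conversely, $R(H)\le b$ trivially gives $\lambda_h\le b$ and $\lambda_\ell\ge -b$.

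The only point needing care is the index bookkeeping $h=k+1$ and $\ell=m-k$ in both parities, which was checked at the start. The degenerate case $m=1$ is consistent: $k=0$ and $h=\ell=1$, so the condition is $|\lambda_1|\le b$, which holds because $\lambda_1=0$ and $b>0$.
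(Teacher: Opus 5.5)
Your proof is correct and follows the same route as the paper. You reduce $R(H)\le b$ to $\lambda_h(H)\le b$ and $\lambda_\ell(H)\ge -b$, translate these into $n_b^+(H)\le h-1$ and $n_b^-(H)\le m-\ell$, and conclude via $h-1=m-\ell=\lfloor (m-1)/2\rfloor$. You also make explicit two steps the paper leaves implicit: the use of $\lambda_h\ge\lambda_\ell$ in the first reduction, and the initial/terminal-segment reasoning behind the counting equivalences.
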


\begin{proof}
By definition, $R(H)\leq b$ if and only if
$\lambda_h(H)\leq b$ and $\lambda_\ell(H)\geq-b$.
These two inequalities are respectively equivalent to
$n_b^+(H)\leq h-1$ and $n_b^-(H)\leq m-\ell$.
Since $h-1=m-\ell=\lfloor(m-1)/2\rfloor$, the result follows.
\end{proof}

\begin{lemma}
\label{lem:count-interlacing}
For every graph $G$, every $S\subseteq V(G)$, and every $b>0$,
\[
n_b^\pm(G)\leq n_b^\pm(G-S)+|S|.
\]
\end{lemma}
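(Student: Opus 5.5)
The plan is to derive this directly from the Eigenvalue Interlacing Theorem (Theorem~\ref{thm:interlacing}). Write $n=|G|$ and $s=|S|$, and treat $n_b^+$ first; the case of $n_b^-$ is symmetric. Set $k=n_b^+(G)$, so that $\lambda_k(G)>b$. If $k\leq s$, the inequality is trivial because $n_b^+(G-S)\geq 0$. Otherwise $k-s\geq 1$, and I apply the right-hand inequality $\lambda_i(G-S)\geq\lambda_{i+s}(G)$ with $i=k-s$. The index is admissible, since $k\leq n$ gives $i\leq n-s$. This yields $\lambda_{k-s}(G-S)\geq\lambda_k(G)>b$. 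Because the eigenvalues are ordered nonincreasingly, at least $k-s$ eigenvalues of $G-S$ exceed $b$, so $n_b^+(G-S)\geq n_b^+(G)-s$, which is the claim.

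For $n_b^-$, set $k=n_b^-(G)$, so that $\lambda_{n-k+1}(G)<-b$, and again assume $k>s$. I use the second chain of interlacing with $i=k-s$, specifically $\lambda_{n-s-i+1}(G-S)\leq\lambda_{n-i+1}(G)$. The eigenvalues of $G-S$ are indexed up to $n-s$, and here $n-s-i+1=n-k+1$ indexes $G-S$ correctly; with $m=n-s$ this reads $\lambda_{m-i+1}(G-S)\leq\lambda_{m-i+1+s}(G)=\lambda_{n-k+1}(G)<-b$. So the $k-s$ smallest eigenvalues of $G-S$ lie below $-b$, which gives $n_b^-(G-S)\geq k-s$.

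An alternative is to apply the positive case to $-A(G)$ and its principal submatrix. The adjacency matrix of $G-S$ is a principal submatrix of $A(G)$, and Cauchy interlacing applies equally to $-A$, so one could simply say ``by symmetry''. The only real obstacle is index bookkeeping. The statement of Theorem~\ref{thm:interlacing} mixes the indexing of $G$ and $G-S$ in its second chain, so I will rederive the lower-end inequality directly from Cauchy interlacing for $-A$ rather than trust that chain's indices.
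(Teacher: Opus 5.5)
Your route is the paper's: the lemma follows directly from interlacing. Your $n_b^+$ argument, taking $i=k-s$ in $\lambda_i(G-S)\geq\lambda_{i+s}(G)$, is correct. So is the symmetry argument that applies it to $-A(G)$ and its principal submatrix $-A(G-S)$. You are also right that the second chain in Theorem~\ref{thm:interlacing} is misindexed; it should read $\lambda_{n-i+1}(G)\leq\lambda_{n-s-i+1}(G-S)\leq\lambda_{n-s-i+1}(G)$.

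The explicit computation in your second paragraph, however, is wrong and should be dropped. The inequality you invoke, $\lambda_{n-s-i+1}(G-S)\leq\lambda_{n-i+1}(G)$, says the $i$-th smallest eigenvalue of $G-S$ is at most the $i$-th smallest eigenvalue of $G$. That is the reverse of interlacing: for $G=K_2$, $|S|=1$, $i=1$ it asserts $0\leq-1$. You also miscompute the index, since $m-i+1+s=n-i+1=n-k+1+s$, not $n-k+1$. The two slips cancel, so your endpoint $\lambda_{n-k+1}(G-S)<-b$ is true. Its actual justification is the other half of interlacing, $\lambda_j(G-S)\leq\lambda_j(G)$ with $j=n-k+1$, which is admissible because $k>s$ gives $j\leq n-s$. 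Add the remark that $\lambda_{n-k+1}(G-S)$ is the $(k-s)$-th smallest eigenvalue of $G-S$. Either write the case that way or rely only on the $-A$ symmetry, as your final sentence already intends.
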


\begin{proof}
This follows immediately from
Theorem~\ref{thm:interlacing}, since deleting $|S|$ vertices can
shift the index of an eigenvalue by at most $|S|$.
\end{proof}

\begin{lemma}
\label{lem:general-deletion}
Let $b>0$, let $G$ be an $n$-vertex graph, and let
$S\subseteq V(G)$ with $|S|=s$. Let $C_1,\ldots,C_k$ be the
connected components of $G-S$. Suppose that
$R(C_i)\leq b$
for $i\in[k]$
and
\[
\sum_{i=1}^k
\left\lfloor\frac{|C_i|-1}{2}\right\rfloor
\leq
\left\lfloor\frac{n-1}{2}\right\rfloor-s.
\]
Then $R(G)\leq b$.
\end{lemma}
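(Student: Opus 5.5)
The plan is to go through the counting characterization, Lemma~\ref{lem:count-characterization}, together with the deletion bound, Lemma~\ref{lem:count-interlacing}. Set $H=G-S$. Since the spectrum of a disjoint union is the multiset union of the spectra of its components, we have
\[
n_b^{\pm}(H)=\sum_{i=1}^k n_b^{\pm}(C_i).
\]

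The first step uses the hypothesis $R(C_i)\leq b$. Applying Lemma~\ref{lem:count-characterization} to each component $C_i$ (of order $|C_i|$) gives
\[
n_b^{\pm}(C_i)\leq\left\lfloor\frac{|C_i|-1}{2}\right\rfloor .
\]
Summing over $i\in[k]$ and using the displayed hypothesis, we obtain
\[
n_b^{\pm}(H)\leq\left\lfloor\frac{n-1}{2}\right\rfloor-s .
\]

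Next, Lemma~\ref{lem:count-interlacing} gives
\[
n_b^{\pm}(G)\leq n_b^{\pm}(G-S)+s\leq\left\lfloor\frac{n-1}{2}\right\rfloor .
\]
Applying Lemma~\ref{lem:count-characterization} once more, now to $G$ of order $n$, yields $R(G)\leq b$.

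There is no real obstacle here. The only points to check are, first, that the decomposition of $n_b^{\pm}$ over components is legitimate, which holds because $A(H)$ is block diagonal. Second, the degenerate case $S=V(G)$, where $k=0$, must be handled. In that case the hypothesis reads $0\leq\lfloor (n-1)/2\rfloor-n$, which is impossible for $n\geq1$, so the case does not arise. Likewise, each component has order at least $1$, so Lemma~\ref{lem:count-characterization} applies to it.
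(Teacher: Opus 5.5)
Your proposal is correct and follows essentially the same route as the paper: apply Lemma~\ref{lem:count-characterization} to each component, sum using the block-diagonal structure of $A(G-S)$ together with the displayed hypothesis, pass to $G$ via Lemma~\ref{lem:count-interlacing}, and conclude with Lemma~\ref{lem:count-characterization} again. Your additional check of the degenerate case $S=V(G)$ is harmless and correct, since that case is indeed excluded by the hypothesis.
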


\begin{proof}
By Lemma~\ref{lem:count-characterization},
\[
n_b^\pm(C_i)
\leq
\left\lfloor\frac{|C_i|-1}{2}\right\rfloor
\qquad \text{for} \quad i\in[k].
\]
Since the spectrum of a disconnected graph is the multiset union of
the spectra of its components,
\[
\begin{aligned}
n_b^\pm(G-S)
=\sum_{i=1}^k n_b^\pm(C_i)
\leq
\sum_{i=1}^k
\left\lfloor\frac{|C_i|-1}{2}\right\rfloor
\leq
\left\lfloor\frac{n-1}{2}\right\rfloor-s.
\end{aligned}
\]
Lemma~\ref{lem:count-interlacing} now gives
\[
n_b^\pm(G)
\leq
\left\lfloor\frac{n-1}{2}\right\rfloor,
\]
and the result follows from
Lemma~\ref{lem:count-characterization}.
\end{proof}

The following theorem reduces the study of the HL-index of a graph to that of suitable subgraphs.

\begin{theorem}   

\label{lem:two-cut-reduction}
Let $b$ be a positive number, let $G$ be an $n$-vertex graph, and let
$S\subseteq V(G)$ with $|S|=2$. Suppose that every component $C$ of
$G-S$ satisfies $R(C)\leq b$. Then $R(G)\leq b$ whenever one of the
following holds:
\begin{enumerate}[label=\rm(\roman*)]
\item $n$ is odd and $G-S$ is disconnected;
\item $n$ is even and $G-S$ has at least three components;
\item $n$ is even and $G-S$ has exactly two components, both of even
order.
\end{enumerate}
\end{theorem}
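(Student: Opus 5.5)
This will follow directly from Lemma~\ref{lem:general-deletion} with $s=2$. Let $C_1,\dots,C_k$ be the components of $G-S$, with $m_i=|C_i|$ and $\sum_i m_i=n-2$. By hypothesis $R(C_i)\le b$ for every $i$, so it suffices to check
\[
\sum_{i=1}^k\left\lfloor\frac{m_i-1}{2}\right\rfloor\le\left\lfloor\frac{n-1}{2}\right\rfloor-2
\]
in each of the three cases.

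The key observation is that $\lfloor (m-1)/2\rfloor=(m-1)/2$ when $m$ is odd and $(m-2)/2$ when $m$ is even. Let $o$ be the number of odd components and $e$ the number of even components, so $k=o+e$. Then
\[
\sum_i\left\lfloor\frac{m_i-1}{2}\right\rfloor=\frac{n-2-o-2e}{2}=\frac{n-2-k-e}{2}.
\]

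\textbf{Case (i).} If $n$ is odd, the right-hand side is $(n-1)/2-2=(n-5)/2$. We therefore need $(n-2-k-e)/2\le (n-5)/2$, that is, $k+e\ge 3$. Since $n-2$ is odd, at least one component is odd. If $k\ge3$ we are done. If $k=2$, the two orders sum to an odd number, so exactly one component is even; then $e=1$ and $k+e=3$. Disconnectedness gives $k\ge2$, so case (i) holds.

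\textbf{Cases (ii) and (iii).} If $n$ is even, the right-hand side is $(n-2)/2-2=(n-6)/2$. We need $(n-2-k-e)/2\le(n-6)/2$, that is, $k+e\ge4$. Since $n-2$ is even, $o$ is even.
\begin{itemize}
\item In case (ii), $k\ge3$. If $e\ge1$ we get $k+e\ge4$. If $e=0$, then $k=o$ is even and at least $3$, hence $k\ge4$.
\item In case (iii), $k=e=2$, so $k+e=4$.
\end{itemize}

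In all three cases Lemma~\ref{lem:general-deletion} applies and gives $R(G)\le b$. There is no real obstacle here; the only care needed is in the parity bookkeeping. The excluded situation, $n$ even with two odd components, gives $k+e=2$, and that is exactly where the bound fails.
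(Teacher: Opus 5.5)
Your proof is correct and follows the paper's route: both apply Lemma~\ref{lem:general-deletion} with $|S|=2$ and check the floor-sum inequality using parity. The only difference is bookkeeping: you compute the sum exactly as $(n-2-k-e)/2$, whereas the paper bounds it by $\lfloor (n-2-k)/2\rfloor$ in cases (i)--(ii) and computes it directly only in case (iii).
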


\begin{proof}
Let $C_1,\ldots,C_k$ be the components of $G$ of orders $m_1,...m_k$. 

\textbf{Case \rm(i).} By Lemma~\ref{lem:general-deletion}, we have $R(G)\leq b$, since
\[
\sum_{i=1}^k
\left\lfloor\frac{m_i-1}{2}\right\rfloor
\leq
\left\lfloor\frac{n-2-k}{2}\right\rfloor
\leq
\left\lfloor\frac{n-1}{2}\right\rfloor-2.
\]

\textbf{Case~\rm(ii).}
It follows by the same argument as in Case (i).

\textbf{Case~\rm(iii).}  Let $m_1=2r$ and $m_2=2s$.
Since $r+s=n/2-1$,
\[
(r-1)+(s-1)
=\frac n2-3
=\left\lfloor\frac{n-1}{2}\right\rfloor-2.
\]
Thus the result follows from Lemma~\ref{lem:general-deletion}.
\end{proof}

In order to prove Theorem \ref{thm:intro-K4}, we determine the exact value of the HL-index for a class of graphs. 
For every integer $m\geq3$, let $T_m$ be the graph with
\[
V(T_m)=\{x_i,y_i:i\in\mathbb Z_m\},
\qquad
E(T_m)=\{x_ix_{i+1},x_iy_i,y_ix_{i+1}:i\in\mathbb Z_m\},
\]
where the indices are taken modulo $m$. Thus
$x_1x_2\cdots x_mx_1$ is a cycle and
$N_{T_m}(y_i)=\{x_i,x_{i+1}\}$ for every $i\in\mathbb Z_m$.
The graph $T_6$ is shown in Figure \ref{fig:Tm}. 
\begin{figure}[htbp]
\centering
\begin{tikzpicture}[
  scale=0.82,
  vtx/.style={circle,fill=black,inner sep=2pt},
  edge/.style={thick}
]
  \def\rin{2.15}
  \def\rout{3.25}

  \foreach \i in {1,...,6}{
    \pgfmathsetmacro{\anglex}{90-60*(\i-1)}
    \pgfmathsetmacro{\angley}{90-60*(\i-0.5)}

    \node[vtx] (x\i) at (\anglex:\rin) {};
    \node[vtx] (y\i) at (\angley:\rout) {};

    \node at (\anglex:1.70) {$x_{\i}$};
    \node at (\angley:3.62) {$y_{\i}$};
  }

  \foreach \i in {1,...,6}{
    \pgfmathtruncatemacro{\j}{mod(\i,6)+1}
    \draw[edge] (x\i)--(x\j);
    \draw[edge] (x\i)--(y\i);
    \draw[edge] (y\i)--(x\j);
  }
\end{tikzpicture}
\caption{The graph $T_6$.}
\label{fig:Tm}
\end{figure}

Note that every graph $T_m$ with $m\ge 3$ is outerplanar and hence
both $K_{4}$-minor-free and  $K_{2,3}$-minor-free. 
\begin{lemma}
\label{lem:Tm-HL-index}
For every $m\geq3$, the outerplanar graph $T_m$ satisfies 
$R(T_m)=\sqrt5-1.$
\end{lemma}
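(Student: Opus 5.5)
The plan is to compute the spectrum of $T_m$ explicitly using its cyclic symmetry, and then read off the two median eigenvalues. The rotation $x_i\mapsto x_{i+1}$, $y_i\mapsto y_{i+1}$ is an automorphism of $T_m$. So for each $k\in\{0,1,\ldots,m-1\}$, with $\omega=e^{2\pi \mathrm{i}k/m}$, I look for eigenvectors of the form $x_j=a\omega^j$, $y_j=b\omega^j$.

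The eigenvalue equations at $x_j$ and at $y_j$ reduce to the $2\times 2$ Hermitian system
\[
\lambda a=(\omega+\bar\omega)a+(1+\bar\omega)b,\qquad \lambda b=(1+\omega)a .
\]
Writing $c=\cos(2\pi k/m)$ and using $|1+\omega|^2=2+2c$, the characteristic equation is $\lambda^2-2c\lambda-(2+2c)=0$. Its roots are
\[
\lambda_\pm(c)=c\pm\sqrt{(c+1)^2+1}.
\]
Since these $m$ two-dimensional invariant subspaces span $\mathbb C^{2m}$, the spectrum of $T_m$ is exactly the multiset $\{\lambda_+(c_k),\lambda_-(c_k): 0\le k\le m-1\}$.

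Next I locate the median eigenvalues. Here $n=2m$, so $h=m$ and $\ell=m+1$.
\begin{itemize}
\item Since $\sqrt{(c+1)^2+1}\ge |c|$, every $\lambda_+(c_k)\ge 0$.
\item Every $\lambda_-(c_k)<0$.
\end{itemize}
Hence the $m$ values $\lambda_+$ are exactly $\lambda_1,\ldots,\lambda_m$, and the $m$ values $\lambda_-$ are $\lambda_{m+1},\ldots,\lambda_{2m}$. It follows that $\lambda_m=\min_k\lambda_+(c_k)$ and $\lambda_{m+1}=\max_k\lambda_-(c_k)$.

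Both $\lambda_\pm$ are increasing in $c$ on $[-1,1]$, because $\lambda_\pm'(c)=1\pm (c+1)/\sqrt{(c+1)^2+1}>0$. Therefore:
\begin{itemize}
\item $\lambda_{m+1}=\lambda_-(1)=1-\sqrt5$, attained at $k=0$, so $|\lambda_{m+1}|=\sqrt5-1$.
\item $\lambda_m=\lambda_+(c_{\min})$, where $c_{\min}=\min_k\cos(2\pi k/m)$. For $m\ge3$, taking $k=\lfloor m/2\rfloor$ gives $c_{\min}\le -1/2$. Then
\[
0\le\lambda_m\le\lambda_+(-1/2)=-\tfrac12+\sqrt{5/4}=\tfrac{\sqrt5-1}{2}<\sqrt5-1.
\]
\end{itemize}
Hence $R(T_m)=\max\{|\lambda_m|,|\lambda_{m+1}|\}=\sqrt5-1$.

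The main point needing care is the justification that the block decomposition captures all eigenvalues with the correct multiplicities, including degenerate modes such as $c=-1$ for even $m$, where $\lambda_\pm=0,-2$. This follows from the orthogonal decomposition of $\mathbb C^{2m}$ into the Fourier subspaces $\{(a\omega^j,b\omega^j)_j : a,b\in\mathbb C\}$, each of which is $A(T_m)$-invariant. The rest is the elementary monotonicity computation above.
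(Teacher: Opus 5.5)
Your proof is correct. From the factorization of $\chi_{T_m}$ onward it is the paper's argument, written in the variable $c=\theta/2$: the same sign split of the two roots, the same monotonicity, and the same evaluations $\lambda_-(1)=1-\sqrt5$ and $\lambda_+(-1/2)=(\sqrt5-1)/2$. These are the paper's $r_-(2)$ and $r_+(-1)$, and its bound $\theta_{\min}\le-1$ is your $c_{\min}\le-1/2$.

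The difference is in how the spectrum is obtained. You block-diagonalize $A(T_m)$ over the Fourier subspaces of the rotation automorphism. The paper instead writes $A(T_m)=\begin{pmatrix}C&B\\B^{\mathsf T}&0\end{pmatrix}$, notes $BB^{\mathsf T}=2I+C$, and uses a Schur complement to get $\chi_{T_m}(t)=\det\bigl(t^2I-tC-(2I+C)\bigr)=\prod_k\bigl(t^2-\theta_kt-(2+\theta_k)\bigr)$.

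Your route gives explicit eigenvectors and settles multiplicities, including the mode $\omega=-1$ for even $m$, through an orthogonal decomposition. It also avoids the passage from $t\neq0$ to all $t$ that the Schur complement step needs.

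The paper's route never uses the cyclic symmetry. It only needs that $B$ is the vertex--edge incidence matrix of the cycle, so that $BB^{\mathsf T}$ is a polynomial in $C$. Consequently the same computation applies when the cycle is replaced by any regular base graph with a new vertex adjacent to both ends of every edge, with that graph's eigenvalues in place of $2\cos(2\pi k/m)$.
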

\begin{proof}
Let $C$ be the adjacency matrix of the cycle
$x_1x_2\cdots x_mx_1$, and let $B$ be the bipartite adjacency matrix
between $\{x_1,\ldots,x_m\}$ and $\{y_1,\ldots,y_m\}$. Then
\[
A(T_m)=
\begin{pmatrix}
C&B\\
B^{\mathsf T}&0
\end{pmatrix}.
\]

Every $x_i$ has exactly two neighbors among the vertices $y_j$, and
two distinct vertices $x_i,x_j$ have a common neighbor among the vertices 
$y_j$ precisely when they are consecutive on the cycle. Hence
$BB^{\mathsf T}=2I+C.$

For $t\neq0$, the Schur complement formula gives
\[
\begin{aligned}
\chi_{T_m}(t)
&=
\det
\begin{pmatrix}
tI-C&-B\\
-B^{\mathsf T}&tI
\end{pmatrix} \\
&=
t^m\det\left(tI-C-\frac1tBB^{\mathsf T}\right) 
=
\det\left(t^2I-tC-BB^{\mathsf T}\right).
\end{aligned}
\]
Using $BB^{\mathsf T}=2I+C$, we obtain
\[
\chi_{T_m}(t)
=
\det\left(t^2I-tC-(2I+C)\right).
\]

The eigenvalues of $C$ are
\[
\theta_k=2\cos\frac{2\pi k}{m},
\qquad\text{for } 0\leq k<m.
\]
It follows that
\[
\chi_{T_m}(t)
=
\prod_{k=0}^{m-1}
\left(t^2-\theta_kt-(2+\theta_k)\right).
\]
For $\theta\in[-2,2]$, let
\[
r_\pm(\theta)
=
\frac{\theta\pm\sqrt{(\theta+2)^2+4}}{2}.
\]
Thus $r_+(\theta)$ and $r_-(\theta)$ are the two roots of
$t^2-\theta t-(2+\theta)=0$. Moreover,
\[
r_+(\theta)\geq0,
\qquad
r_-(\theta)<0,
\]
and both functions are strictly increasing on $[-2,2]$, since
\[
r_\pm'(\theta)
=
\frac12\left(
1\pm\frac{\theta+2}{\sqrt{(\theta+2)^2+4}}
\right)>0.
\]

Consequently, the $m$ values $r_+(\theta_k)$ are the nonnegative
eigenvalues of $T_m$, while the $m$ values $r_-(\theta_k)$ are its
negative eigenvalues. If $\theta_{\min}$ denotes the least
eigenvalue of $C_m$, then
\[
\lambda_m(T_m)=r_+(\theta_{\min}),
\qquad
\lambda_{m+1}(T_m)=r_-(2).
\]
Indeed, $2$ is the largest eigenvalue of $C_m$, and both $r_+$ and
$r_-$ are increasing.

We have
\[
\lambda_{m+1}(T_m)=r_-(2)=1-\sqrt5.
\]
Furthermore,
\[
\theta_{\min}
=
\begin{cases}
-2, & \text{if $m$ is even},\\[1mm]
-2\cos(\pi/m), & \text{if $m$ is odd},
\end{cases}
\]
so $\theta_{\min}\leq-1$ for every $m\geq3$. Hence
\[
0\leq\lambda_m(T_m)
=r_+(\theta_{\min})
\leq r_+(-1)
=\frac{\sqrt5-1}{2}.
\]
Therefore,
\[
\begin{aligned}
R(T_m)
=
\max\{\lambda_m(T_m),-\lambda_{m+1}(T_m)\}
=
\sqrt5-1.
\end{aligned}
\]
This proves Lemma~\ref{lem:Tm-HL-index}.
\end{proof}

\section{The proof of Theorem~\ref{thm:intro-K4}}
\label{sec:K4}

\subsection{$K_{4}$-minor-free graphs}
\label{subsec:K4}
In order to prove Theorem \ref{thm:intro-K4}, we first present the following lemma and theorem.

\begin{lemma}
\label{lem:alternating-Hamilton-cycle}
Let $m\geq3$, and suppose that $G$ has a Hamilton cycle
$x_1y_1x_2y_2\cdots x_my_mx_1$ such that
$d_G(y_i)=2$ for every $i\in\mathbb Z_m$. Then
$
R(G)=\sqrt5-1
\text{ if }G\cong T_m;
$ 
$R(G)\leq1
\text{, otherwise}.$

\end{lemma}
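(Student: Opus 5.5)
The plan is to reuse the block structure from the proof of Lemma~\ref{lem:Tm-HL-index} and turn it into an inertia count at $t=\pm1$. Write $X=\{x_1,\dots,x_m\}$ and $Y=\{y_1,\dots,y_m\}$. Since $d_G(y_i)=2$ and the Hamilton cycle alternates, $Y$ is independent and $N_G(y_i)=\{x_i,x_{i+1}\}$. All remaining edges of $G$ therefore lie inside $X$.

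Let $D$ be the adjacency matrix of $G[X]$, which is an arbitrary graph on $X$. Let $B$ and $C$ be as in Lemma~\ref{lem:Tm-HL-index}, so that $BB^{\mathsf T}=2I+C$ and
\[
A(G)=\begin{pmatrix} D&B\\ B^{\mathsf T}&0\end{pmatrix}.
\]
We have $G\cong T_m$ via the given labelling exactly when $D=C$, and that case is covered by Lemma~\ref{lem:Tm-HL-index}. So assume $D\neq C$. Since $n=2m$, Lemma~\ref{lem:count-characterization} says it suffices to show $n_1^+(G)\le m-1$ and $n_1^-(G)\le m-1$.

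\textbf{Key tool.} For $t\neq0$, Haynsworth inertia additivity applied to $tI-A(G)$, with pivot the $Y$-block $tI_m$, gives
\[
\operatorname{In}(tI-A)=\operatorname{In}(tI_m)+\operatorname{In}(S_t),
\qquad
S_t=tI-D-\tfrac1t(2I+C).
\]
Eigenvalues exactly equal to $\pm1$ do not contribute to $n_1^{\pm}$. Hence
\[
n_1^+(G)=\#\{\lambda_i>1+\varepsilon\}
\quad\text{and}\quad
n_1^-(G)=\#\{\lambda_i<-1-\varepsilon\}
\]
for all sufficiently small $\varepsilon>0$, and I will evaluate the inertia at $t=\pm(1+\varepsilon)$.

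\textbf{Positive side, $t=1+\varepsilon$.} The block $tI_m$ is positive definite, so $n_{1}^+(G)$ equals the number of negative eigenvalues of $S_t$. It suffices to exhibit one vector $z$ with $z^{\mathsf T}S_tz>0$, because then $S_t$ has at most $m-1$ negative eigenvalues. Take $z=e_{x_1}-e_{x_2}$, where $x_1x_2$ is an edge of the cycle. Then $z^{\mathsf T}Cz=-2$ and $z^{\mathsf T}Dz=-2D_{12}\le0$, so
\[
z^{\mathsf T}S_tz=2t+2D_{12}-\frac{2}{t}>0 \qquad (t>1).
\]
This works for every $G$ in the statement, including $T_m$, which matches Lemma~\ref{lem:Tm-HL-index}: there the positive median eigenvalue is small.

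\textbf{Negative side, $t=-(1+\varepsilon)$.} Now $tI_m$ contributes $m$ negative eigenvalues, and $n_1^-(G)$ equals the number of positive eigenvalues of
\[
S_t=tI-D+\frac{1}{|t|}(2I+C).
\]
I need a vector $z$ with $z^{\mathsf T}S_tz<0$. This is where $D\ne C$ is used, and there are two cases.

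\begin{itemize}
\item If some cycle edge $x_ix_{i+1}$ is missing from $D$, take $z=e_{x_i}-e_{x_{i+1}}$. This gives
\[
z^{\mathsf T}S_tz=-2|t|+\frac{4-2}{|t|}<0 .
\]
\item Otherwise $D\supsetneq C$, so $D$ has a chord $x_ix_j$ with $C_{ij}=0$. Take $z=e_{x_i}+e_{x_j}$. This gives
\[
z^{\mathsf T}S_tz=-2|t|-2+\frac{4}{|t|}<0 .
\]
\end{itemize}

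In either case $S_t$ has at most $m-1$ positive eigenvalues, so $n_1^-(G)\le m-1$ and hence $R(G)\le1$.

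The main obstacle I anticipate is the careful bookkeeping at the boundary values $t=\pm1$. At $t=\pm1$ the test quadratic forms are only $\ge0$ or $\le0$, which is why I perturb to $\pm(1+\varepsilon)$. I will also double-check the sign conventions in the inertia formula, in particular that the negative inertia of $tI-A$ counts eigenvalues greater than $t$.
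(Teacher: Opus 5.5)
Your argument is correct, but it takes a different route from the paper. The paper uses the same case split (a missing short edge, a present long edge, or $G\cong T_m$) but works directly with vertex deletion and interlacing. It deletes the $m-2$ vertices of $X$ other than $x_i,x_{i+1}$ (resp.\ $x_i,x_j$). This leaves $P_5\cup(m-3)K_1$ (resp.\ a double star plus $m-4$ isolated vertices), each with exactly one eigenvalue above $1$ and one below $-1$. Lemma~\ref{lem:count-interlacing} then gives $n_1^\pm(G)\le m-1$ for both signs at once.

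You instead pivot on the $Y$-block, combine Haynsworth additivity with $BB^{\mathsf T}=2I+C$ from Lemma~\ref{lem:Tm-HL-index}, and exclude definiteness of the $m\times m$ Schur complement with a single test vector. The two routes are closely linked. The principal submatrix of $S_t$ on $\{x_i,x_j\}$ is precisely the Schur complement of $tI-A(H)$ with respect to its $Y$-block, where $H=G[Y\cup\{x_i,x_j\}]$ is the paper's subgraph. So your test vectors probe the same local configuration.

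The paper's version is more elementary: it needs only Cauchy interlacing and two small explicit spectra. Yours gives a sharper picture. It shows that $\lambda_m(G)\le1$ for every graph in the class, $T_m$ included, and that the hypothesis $D\neq C$ is needed only for the lower median $\lambda_{m+1}$, consistent with $\lambda_{m+1}(T_m)=1-\sqrt5$.

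Incidentally, the perturbation to $t=\pm(1+\varepsilon)$ is unnecessary. At $t=\pm1$ the pivot $\pm I_m$ is already invertible, and Haynsworth gives exactly that $n_1^+(G)$ is the number of negative eigenvalues of $S_1$ and $n_1^-(G)$ is the number of positive eigenvalues of $S_{-1}$. A nonzero $z$ with $z^{\mathsf T}S_1z\ge0$ (resp.\ $z^{\mathsf T}S_{-1}z\le0$) already rules out negative (resp.\ positive) definiteness. So the weak inequalities you get at $t=\pm1$ suffice.
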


\begin{proof}
Let $X=\{x_1,\ldots,x_m\}$, $Y=\{y_1,\ldots,y_m\}$, and
$F=G[X]$. Call an edge $x_ix_{i+1}$ of $F$ a \emph{short edge};
every other edge of $F$ is called a \emph{long edge}.

Suppose first that $F$ excludes a short edge, say $x_ix_{i+1}$. Let
$H=G[Y\cup\{x_i,x_{i+1}\}].$
Then
$H\cong P_5\cup(m-3)K_1,$
where the nontrivial component is
$y_{i-1}x_iy_ix_{i+1}y_{i+1}$. Since
$\Spec(P_5)=\{\sqrt3,1,0,-1,-\sqrt3\},$
we have $n_1^+(H)=n_1^-(H)=1$. The graph $H$ is obtained from $G$
by deleting $m-2$ vertices, and therefore
\[
n_1^\pm(G)
\leq n_1^\pm(H)+m-2
=m-1
=\left\lfloor\frac{2m-1}{2}\right\rfloor.
\]
Lemma~\ref{lem:count-characterization} gives $R(G)\leq1$.

Suppose next that $F$ contains a long edge $x_ix_j$. Then $x_i$ and
$x_j$ are nonconsecutive, so their four neighbors in $Y$ are
distinct. For
$H=G[Y\cup\{x_i,x_j\}],$
we have
$H\cong D\cup(m-4)K_1,$
where $D$ is the double star obtained by joining the centers of two
copies of $K_{1,2}$. Since
$\chi_D(t)=t^2(t^2-1)(t^2-4), 
\Spec(D)=\{2,1,0,0,-1,-2\},$
we again have $n_1^+(H)=n_1^-(H)=1$. Hence
\[
n_1^\pm(G)
\leq1+m-2
=m-1
=\left\lfloor\frac{2m-1}{2}\right\rfloor,
\]
and Lemma~\ref{lem:count-characterization} yields $R(G)\leq1$.

It remains to consider the case in which $F$ contains every short
edge and excludes every long edge. In this case $G\cong T_m$, and
Lemma~\ref{lem:Tm-HL-index} gives
$R(G)=R(T_m)=\sqrt5-1.$
\end{proof}

\begin{theorem}
\label{thm:K4-main}
For every $K_4$-minor-free graph $G$, 
$R(G)\leq\sqrt5-1.$
\end{theorem}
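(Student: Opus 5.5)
Set $b=\sqrt5-1$. I would argue by a minimal counterexample: let $G$ be a $K_4$-minor-free graph with $R(G)>b$ and $|G|$ as small as possible. Since minors of $K_4$-minor-free graphs are again $K_4$-minor-free, every proper induced subgraph satisfies $R\le b$. Each component of a disconnected graph has $R\le b$, and Lemma~\ref{lem:general-deletion} with $S=\varnothing$ and the superadditivity of $\lfloor (m-1)/2\rfloor$ then give $R(G)\le b$. Hence $G$ is connected. Small cases ($n\le 3$, trees, cycles) have $R\le 1<b$, so I may assume $n$ is large enough to use the structure theory.

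\textbf{Cut reductions.} If $G$ has a cut vertex $v$, I apply Lemma~\ref{lem:general-deletion} with $S=\{v\}$. For $n$ odd this works as soon as $G-v$ has an even component. When $n$ is even, $G-v$ has odd order and the count is tight, so a direct deletion is not enough. There I would instead delete a vertex $u$ (a leaf-block vertex, or the neighbour of a pendant vertex) chosen so that the remaining components again satisfy the inequality. The same reasoning with $|S|=2$ is Theorem~\ref{lem:two-cut-reduction}. If $G$ has a $2$-cut $S$ falling under cases (i)--(iii), then $R(G)\le b$, since the components of $G-S$ are $K_4$-minor-free and smaller. So in a minimal counterexample every $2$-cut $S$ (and $K_4$-minor-free graphs have many) satisfies the following: $n$ is even, and $G-S$ has exactly two components, both of odd order.

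\textbf{Structure.} A $2$-connected $K_4$-minor-free graph is a series--parallel graph, so it has a vertex of degree $2$. Every vertex of degree $2$ with neighbours $a,c$ gives a $2$-cut $\{a,c\}$ separating it, unless $G$ is a cycle. By the previous paragraph, $G-\{a,c\}$ is $\{y\}$ together with one odd remaining component. I would push this parity constraint through the ear/SP-decomposition. The leaves of the SP-tree should then be forced to be single degree-$2$ vertices sitting on edges or paths between branch vertices, with all parallel classes having size exactly one degree-$2$ vertex. From this I would aim to show that the degree-$2$ vertices and the remaining vertices alternate around a Hamilton cycle $x_1y_1\cdots x_my_m x_1$ with $d(y_i)=2$. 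Lemma~\ref{lem:alternating-Hamilton-cycle} then gives $R(G)\le b$, with equality only for $T_m$, which contradicts the choice of $G$.

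\textbf{Main obstacle.} The hardest step is the even-$n$ cut-vertex case together with the final structural step. Two things must be ruled out: two degree-$2$ vertices in the same parallel class, and longer threads (paths of degree-$2$ vertices). For a thread $x\,y\,y'\,x'$ I would delete the two inner vertices, or one vertex adjacent to the thread, and count directly with Lemma~\ref{lem:count-interlacing}, using the explicit small spectra ($P_3$, $P_5$, the double star $D$) as in the proof of Lemma~\ref{lem:alternating-Hamilton-cycle}. I expect several such local configurations to need hand-checked eigenvalue counts against the threshold $b$ rather than $1$. This is where the extra room between $1$ and $\sqrt5-1$ is used.
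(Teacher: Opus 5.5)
Your framework is the same as the paper's: a minimal counterexample, the observation that Theorem~\ref{lem:two-cut-reduction} forces every $2$-cut $S$ to leave exactly two components of odd order (with $n$ even), and a final appeal to Lemma~\ref{lem:alternating-Hamilton-cycle}. \textbf{The gap} is the step that carries the whole theorem. You need to get from this $2$-cut condition to a Hamilton cycle $x_1y_1\cdots x_my_mx_1$ with every $d(y_i)=2$, and this is only announced. The local analysis you sketch cannot deliver it.

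Hamiltonicity is not automatic: $K_{2,3}$ is $2$-connected and $K_4$-minor-free but not Hamiltonian. Nor do conditions on degree-$2$ vertices and their neighbourhoods force the alternation. Take $C_6=v_1v_2\cdots v_6v_1$ plus the chords $v_1v_3$ and $v_4v_6$. This graph is outerplanar. Its degree-$2$ vertices $v_2,v_5$ are non-adjacent with different neighbourhoods, and each cut $N(v_i)$ leaves $\{v_i\}$ and an odd triangle. Yet it is not of the alternating form. It is excluded only by the $2$-cut $\{v_1,v_4\}$, whose components $\{v_2,v_3\}$ and $\{v_5,v_6\}$ are even.

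The paper handles this globally with a longest cycle $C$:
\begin{enumerate}[label=(\alph*)]
\item A vertex off $C$ gives three internally disjoint $uv$-paths. Their interiors must lie in different components of $G-\{u,v\}$, since otherwise a $K_4$-subdivision appears. So Theorem~\ref{lem:two-cut-reduction}(i)/(ii) applies, and $C$ is Hamiltonian.
\item Chords cannot cross, so chord ends are $2$-cuts. This forces $n$ even and every chord to join vertices of equal index parity.
\item A minimal-distance argument (the paper's Claim~4) shows that odd and even chords cannot coexist. When $d=1$, it chooses the pair minimizing the arc $Q$, which produces the cut $\{x,u'\}$ with two even sides. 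Hence one parity class consists of degree-$2$ vertices.
\end{enumerate}
Some argument of this kind, using $2$-cuts that are not neighbourhoods of degree-$2$ vertices, is what your SP-tree sketch is missing.

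The obstacles you single out are not the real ones. The cut-vertex case works for every $n$. If $G-v$ has components of orders $m_1,\dots,m_k$ with $k\ge2$, then $\sum_i\lfloor (m_i-1)/2\rfloor\le (n-1-k)/2\le (n-3)/2$. Since the left side is an integer, it is at most $\lfloor (n-3)/2\rfloor=\lfloor (n-1)/2\rfloor-1$. So Lemma~\ref{lem:general-deletion} with $S=\{v\}$ gives $2$-connectivity directly, and no special choice of deleted vertex is needed.

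Threads and repeated degree-$2$ vertices also fall to Theorem~\ref{lem:two-cut-reduction} without any spectral computation, apart from trivial graphs on at most four vertices. For a thread $x\,y\,y'\,x'$, the cut $\{x,x'\}$ cuts off the even component $\{y,y'\}$. Two degree-$2$ vertices with common neighbourhood $\{a,c\}$ make $G-\{a,c\}$ have at least three components. So the reductions never use the room between $1$ and $\sqrt5-1$. The value $\sqrt5-1$ enters only through $T_m$ in Lemma~\ref{lem:alternating-Hamilton-cycle}.
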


\begin{proof}
Suppose otherwise, and let $G$ be a counterexample of minimum order.
By minimality and Lemma~\ref{lem:general-deletion}, applied with
$S=\varnothing$ to the components of $G$, the graph $G$ is connected.
Similarly, applying Lemma~\ref{lem:general-deletion} with
$S=\{v\}$ excludes every cut vertex $v$ of $G$. Since
$R(K_1)=0$ and $R(K_2)=1<\sqrt5-1$, the graph $G$ has order at least
$3$ and is therefore $2$-connected.

Let $C$ be a longest cycle of $G$. If $G=C$, then
\[
\Spec(C_n)=
\left\{2\cos\frac{2\pi j}{n}:0\leq j<n\right\}
\]
gives $R(G)\leq1<\sqrt5-1$. Thus $G$ is not a cycle.

\medskip
\noindent
\textbf{Claim 1.} Every vertex of $G$ lies on $C$.

\begin{proof}
Suppose, to the contrary, that $V(G)\setminus V(C)\neq\varnothing$,
and let $D$ be a component of $G-V(C)$. Since $G$ is
$2$-connected, $D$ has at least two distinct neighbors on $C$.
Choose distinct vertices
$
u,v\in N_G(D)\cap V(C),
$
and let $P_1$ be a $uv$-path whose internal vertices lie in $D$.
Let $P_2$ and $P_3$ be the two $uv$-arcs of $C$.

The vertices $u$ and $v$ are not consecutive on $C$. Indeed, if
$uv\in E(C)$, then replacing the edge $uv$ by $P_1$ would produce
a cycle longer than $C$. Hence
$
P_1,P_2, P_3
$
are three internally disjoint $uv$-paths, each with nonempty
interior.

We claim that the interiors of $P_1,P_2,P_3$ lie in three distinct
components of $G-\{u,v\}$. Suppose otherwise. Then the interiors of
at least two of these paths are joined in $G-\{u,v\}$.

For each pair $\{i,j\}\subseteq\{1,2,3\}$ for which
$V(P_i)\setminus\{u,v\}$ and $V(P_j)\setminus\{u,v\}$ are joined in
$G-\{u,v\}$, choose a shortest path $Q_{ij}$ in $G-\{u,v\}$ with 
one end in $V(P_i)\setminus\{u,v\}$ and the other end in
$V(P_j)\setminus\{u,v\}$. Among all such paths, choose one of minimum
length. After relabeling $P_1,P_2,P_3$ if necessary, denote this path
by $Q=Q_{12}$, and let its end vertices be
$a\in V(P_1)\setminus\{u,v\}$ and
$b\in V(P_2)\setminus\{u,v\}$.

By the choice of $Q$, no internal vertex of $Q$ lies on $P_1\cup P_2\cup P_3$, since otherwise a proper subpath of $Q$ would be a shorter path joining two of their interiors. Hence $Q$, $P_3$, and the four relevant subpaths of $P_1$ and $P_2$ form a subdivision of $K_4$. Hence $G$ contains a $K_4$-minor, contradicting the assumption that $G$ is $K_4$-minor-free.

Therefore, the interiors of $P_1,P_2,P_3$ lie in three distinct
components of $G-\{u,v\}$. In particular,
$G-\{u,v\}$ has at least three components. Every such component is
a proper $K_4$-minor-free subgraph of $G$, and hence has HL-index at
most $\sqrt{5}-1$ by the minimality of $G$. If $|G|$ is odd, apply
Theorem~\ref{lem:two-cut-reduction}(i); if $|G|$ is even, apply
Theorem~\ref{lem:two-cut-reduction}(ii). In either case,
$
R(G)\leq\sqrt{5}-1,$
a contradiction. This proves Claim~1.
\end{proof}

\medskip
\noindent

\medskip
\noindent
\textbf{Claim 2.} The end vertices of every chord of $C$ form a
$2$-vertex cut.

\begin{proof}
Let $xy$ be a chord of $C$. Since $xy$ is not an edge of $C$, the
two open $xy$-arcs of $C$ are both nonempty. Denote their vertex
sets by $A$ and $B$.

Suppose that $G-\{x,y\}$ is connected. By Claim~1, every vertex of
$G$ lies on $C$. Since the subgraphs of $C-\{x,y\}$ induced by
$A$ and $B$ are connected, the connectedness of $G-\{x,y\}$
implies that there exists an edge
$ab\in E(G)$
with
$a\in A, b\in B.$
The edge $ab$ is necessarily a chord of $C$. The two chords $xy$ and $ab$, together with the four subpaths of $C$ between successive vertices among $x,a,y,b$, form a subdivision of $K_4$. Hence $G$ contains a $K_4$-minor, contradicting the assumption that $G$ is $K_4$-minor-free. Therefore, $G-\{x,y\}$ is disconnected, and Claim~2 follows.
\end{proof}

\medskip
\noindent
\textbf{Claim 3.} The order of $G$ is even, and every chord of $C=v_1v_2\cdots v_nv_1$
joins two vertices whose indices have the same parity.

\begin{proof}
By Claim~1, every vertex of $G$ lies on $C$. Since $G$ is not a
cycle, $C$ has at least one chord.

Suppose first that $n$ is odd. By Claim~2, deleting the
end vertices of any chord disconnects $G$. Every component of the
resulting graph is a proper $K_4$-minor-free subgraph of $G$, and
hence has HL-index at most $\sqrt5-1$ by the minimality of $G$.
Theorem~\ref{lem:two-cut-reduction}(i) then gives
$R(G)\leq\sqrt5-1,$ 
a contradiction. Therefore,
$n=2m$ is even.

It remains to prove the assertion about the parity of the indices of 
end vertices of a chord. Suppose that $v_iv_j$ is a chord with $i$
and $j$ of opposite parity. Since $n$ is even, both open
$v_iv_j$-arcs of $C$ have even order.

By Claim~2, the graph $G-\{v_i,v_j\}$ is disconnected. By Claim~1,
the two open $v_iv_j$-arcs partition
$V(G)\setminus\{v_i,v_j\},$
and each of them induces a connected subgraph. Consequently, they
are precisely the two components of $G-\{v_i,v_j\}$. Both
components have even order, so
Theorem~\ref{lem:two-cut-reduction}(iii) gives
$R(G)\leq\sqrt5-1,$ again a contradiction.

Therefore, every chord of $C$ joins two vertices whose indices
have the same parity, and Claim~3 follows.
\end{proof}

Let $O$ be the set of chords with two odd-indexed end vertices, and
let $E$ be the set of chords with two even-indexed end vertices.

\medskip
\noindent
\textbf{Claim 4.} At least one of $O$ and $E$ is empty.

\begin{proof}
First, note that no two chords of $C$ can cross. More precisely, if two chords have
four distinct end vertices occurring alternately on $C$, then the
two chords together with the four intervening subpaths of $C$ form
a subdivision of $K_4$. Thus no such pair of chords exists.

Suppose, to the contrary, that both $O$ and $E$ are nonempty. For
$p,q\in V(C)$, let
$d_C(p,q)$
denote the number of edges in a shortest $pq$-arc of $C$. 
For two chords $e$ and $f$, define
\[
d_C(e,f)
=
\min\bigl\{
d_C(p,q):
p\text{ is an end vertex of }e,\ 
q\text{ is an end vertex of }f
\bigr\}.
\]

Choose chords
$e=xx'\in O$
and
$f=uu'\in E$
such that $d_C(e,f)$ is minimum. Relabel the end vertices if
necessary so that
$d_C(x,u)=d_C(e,f)=d,$
and let $P$ be a shortest $xu$-arc of $C$. Since $x$ and $u$ have
opposite parity, $d$ is odd.

Suppose first that $d>1$. No internal vertex of $P$ is incident
with a chord. Indeed, let $z$ be an internal vertex of $P$ and
suppose that $z$ is incident with a chord $g$. If $z$ has the same
parity as $x$, then $g\in O$, and
\[
d_C(g,f)\leq d_C(z,u)<d,
\]
contrary to the choice of $e$ and $f$. If $z$ has the same parity
as $u$, then $g\in E$, and
\[
d_C(e,g)\leq d_C(x,z)<d,
\]
which is again a contradiction.

It follows that
$V(P)\setminus\{x,u\}$
is the vertex set of a component of $G-\{x,u\}$. This component has
order $d-1$, which is even. The remaining vertices of
$G-\{x,u\}$ also have even total order, since $|G|$ is even and
$d$ is odd. If $G-\{x,u\}$ has at least three components, then
Theorem~\ref{lem:two-cut-reduction}(ii) applies. If it has exactly
two components, both have even order and
Theorem~\ref{lem:two-cut-reduction}(iii) applies. In either case,
$R(G)\leq\sqrt5-1$, a contradiction.

It remains to consider the case
$d=1.$
Thus $x$ and $u$ are consecutive on $C$. Among all pairs
$e=xx'\in O$ and $f=uu'\in E$ satisfying $d_C(x,u)=1$, choose
one for which the following gap is minimum. Orient $C$ from $x$
towards $u$. Since $e$ and $f$ do not cross, their end vertices
occur in the cyclic order
\[
x,\ u,\ \ldots,\ u',\ \ldots,\ x'.
\]
Let $Q$ be the $u'x'$-arc of $C$ containing neither $x$ nor $u$,
and choose $e\in O$ and $f\in E$ so that
$|E(Q)|$
is minimum.

We claim that $\{x,u'\}$ is a $2$-vertex cut. Suppose otherwise. 
The two open $xu'$-arcs of $C$ are connected subgraphs whose vertex
sets partition $V(G)\setminus\{x,u'\}$. 
Since
$G-\{x,u'\}$ is connected, some edge $ab$ joins the two open $xu'$-arcs of $C$. The edge $ab$ is a chord of $C$. 
Let $a$ lie on the open $xu'$-arc containing $u$, and let $b$ lie
on the other open $xu'$-arc. The chord $ab$ cannot cross
$f=uu'$. Since $x$ and $u$ are consecutive, this forces
$a=u.$
Similarly, the chord $ub$ cannot cross $e=xx'$. Hence $b$ lies on
the $u'x'$-arc $Q$. If $b=x'$, then $ub$ is a chord whose
end vertices have opposite parity, contradicting the conclusion
preceding Claim~3. Therefore, $b$ lies in the interior of $Q$.

The chord $ub$ belongs to $E$. Moreover, the pair
$e=xx'\in O$
and 
$ub\in E$
still has distance one, since $x$ and $u$ are consecutive.
However, the arc from $b$ to $x'$ contained in $Q$ is strictly
shorter than $Q$. This contradicts the minimality of
$|E(Q)|$.

Therefore, $\{x,u'\}$ is a $2$-vertex cut. Since $x$ and $u'$ have
opposite parity, both open $xu'$-arcs of $C$ have even order. By
Claim~1, these arcs partition all vertices of
$G-\{x,u'\}$, and hence they are precisely its two components.
Theorem~\ref{lem:two-cut-reduction}(iii) now gives
$R(G)\leq\sqrt5-1,$
a final contradiction. This proves Claim~4.
\end{proof}

\medskip

By symmetry, assume that $E=\varnothing$. Let
$x_i=v_{2i-1},
y_i=v_{2i}$ 
for $i\in\mathbb Z_m$.
Every edge outside the Hamilton cycle has both end vertices among
$x_1,\ldots,x_m$. Therefore $d_G(y_i)=2$ for every
$i\in\mathbb Z_m$.

If $m=2$, then $G$ is either $C_4$ or $C_4$ with one diagonal, and
a direct calculation gives $R(G)\leq1<\sqrt5-1$. If $m\geq3$,
Lemma~\ref{lem:alternating-Hamilton-cycle} gives
$R(G)\leq\sqrt5-1$, again a contradiction. Therefore
$R(G)\leq\sqrt5-1.$
\end{proof}

\subsection{$K_{2,3}$-minor-free graphs}
\label{sec:K23}

In order to prove Theorem \ref{thm:intro-K4}, we also need to prove a lemma and a theorem as follows. 
The following equivalent block formulation follows from the
$1$-sum characterization of Yu, Shu, and Hong~\cite[Lemma~3.2]{YuShuHong2012}.

\begin{lemma}\cite[Lemma~3.2]{YuShuHong2012}
% (See~\cite[Lemma~3.2]{YuShuHong2012}.)
\label{lem:K23-block-structure}
A graph is $K_{2,3}$-minor-free if and only if each of its blocks is
either outerplanar or isomorphic to $K_4$.
\end{lemma}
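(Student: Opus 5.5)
The plan is to derive the block formulation from the $1$-sum characterization in \cite[Lemma~3.2]{YuShuHong2012}. That result states that a $2$-connected graph is $K_{2,3}$-minor-free if and only if it is outerplanar or isomorphic to $K_4$. The first step is to reduce the general statement to the $2$-connected case, using the fact that $K_{2,3}$ is itself $2$-connected.

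\emph{Necessity.} Let $G$ be $K_{2,3}$-minor-free and let $B$ be a block of $G$. Then $B$ is a subgraph of $G$, and every minor of $B$ is a minor of $G$, so $B$ is $K_{2,3}$-minor-free. If $B$ is $K_1$ or $K_2$, it is trivially outerplanar. Otherwise $B$ is $2$-connected, and the $2$-connected case of \cite[Lemma~3.2]{YuShuHong2012} shows that $B$ is outerplanar or $B\cong K_4$.

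\emph{Sufficiency.} Suppose every block of $G$ is outerplanar or isomorphic to $K_4$. Outerplanar graphs exclude $K_{2,3}$ as a minor, since outerplanarity is characterized by excluding $K_4$ and $K_{2,3}$ as minors. The graph $K_4$ has only four vertices, so it also has no $K_{2,3}$-minor. It therefore remains to show that if each block of $G$ is $K_{2,3}$-minor-free, then so is $G$. This is the step I expect to be the main obstacle, and it rests on the standard fact that a $2$-connected minor of a graph lies in a single block.

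To prove that fact, I would use a minimal model of $K_{2,3}$ in $G$: five disjoint connected branch sets $V_1,\dots,V_5$, with an edge between $V_i$ and $V_j$ whenever $ij$ is an edge of $K_{2,3}$. Suppose some cut vertex $c$ separates branch sets of the model. Since $K_{2,3}$ is $2$-connected, removing the branch set containing $c$ (or $c$ itself, if $c$ is in no branch set) leaves the remaining branch sets connected to each other in $K_{2,3}$, and hence lying in one component of $G-c$.

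Next, I would trim each branch set to its intersection with the block containing the union of the model's edges. Concretely, take the subgraph $M$ formed by the branch sets together with the connecting edges. Any cut vertex of $M$ would give a $1$-separation of a model of a $2$-connected graph, and the side containing no edges between distinct branch sets can be deleted. This leaves a $2$-connected model lying inside one block of $G$, which contradicts that block being $K_{2,3}$-minor-free.

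Alternatively, I would simply cite the $1$-sum form of \cite[Lemma~3.2]{YuShuHong2012}: a graph is $K_{2,3}$-minor-free if and only if it is a $1$-sum of outerplanar graphs and copies of $K_4$. Decomposing a graph at its cut vertices expresses it exactly as the $1$-sum of its blocks. Together with the fact that the outerplanar and $K_4$ summands can be taken $2$-connected, or as $K_2$ and $K_1$, this gives the block formulation directly.
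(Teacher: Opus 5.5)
Your proposal is correct and matches the paper, which gives no argument beyond invoking the $1$-sum characterization of \cite[Lemma~3.2]{YuShuHong2012} and translating it to blocks, exactly as in your final paragraph; your minimal-model argument that a $K_{2,3}$-minor must lie inside a single block just makes explicit the step the paper leaves implicit. One quibble: the paper describes the cited lemma as a $1$-sum statement, not the $2$-connected statement you attribute to it, but the $2$-connected version follows immediately because a $2$-connected graph cannot be a nontrivial $1$-sum, so your argument is unaffected.
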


\begin{theorem}
\label{thm:K23-main}
Let $G$ be a $K_{2,3}$-minor-free graph. Then
$R(G)\leq\sqrt5-1.$
\end{theorem}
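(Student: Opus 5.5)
We argue by minimal counterexample, reducing to Theorem~\ref{thm:K4-main} and Lemma~\ref{lem:K23-block-structure}. Let $G$ be a $K_{2,3}$-minor-free graph with $R(G)>\sqrt5-1$ and $|G|$ minimum. Every subgraph of $G$ is again $K_{2,3}$-minor-free. As in the first paragraph of the proof of Theorem~\ref{thm:K4-main}, we apply Lemma~\ref{lem:general-deletion} with $S=\varnothing$ to the components of $G$, and with $S=\{v\}$ for a cut vertex $v$. For the cut-vertex case the required inequality $\sum_i\lfloor(|C_i|-1)/2\rfloor\le\lfloor(n-1)/2\rfloor-1$ holds, since $\sum_i(|C_i|-1)/2\le (n-1-k)/2$ with $k\ge 2$; one checks the floors as in Theorem~\ref{lem:two-cut-reduction}(i). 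Together with $R(K_1)=0$ and $R(K_2)=1$, this shows that $G$ is $2$-connected. Thus $G$ is a single block.

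By Lemma~\ref{lem:K23-block-structure}, $G$ is then either outerplanar or isomorphic to $K_4$.
\begin{itemize}[label=--]
\item If $G$ is outerplanar, it is $K_4$-minor-free, since outerplanar graphs exclude both $K_4$ and $K_{2,3}$ as minors. Theorem~\ref{thm:K4-main} then gives $R(G)\le\sqrt5-1$, a contradiction.
\item If $G\cong K_4$, then $\Spec(K_4)=\{3,-1,-1,-1\}$. Here $n=4$, so $h=2$ and $\ell=3$, and $R(K_4)=\max\{|-1|,|-1|\}=1<\sqrt5-1$, again a contradiction.
\end{itemize}

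In fact the minimal-counterexample framing is almost unnecessary. We could instead induct on the number of blocks and glue along cut vertices using Lemma~\ref{lem:general-deletion} with $|S|=1$. The minimal-counterexample version is cleaner, however, because it reuses verbatim the reduction already carried out in the proof of Theorem~\ref{thm:K4-main}.

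The only point needing care is the cut-vertex reduction. It requires that every component of $G-v$ has HL-index at most $\sqrt5-1$, which holds by minimality, and that the counting inequality holds whenever $G-v$ has at least two components. This is a short parity check on $\lfloor\cdot\rfloor$: if $n$ is odd it follows as in case~(i) of Theorem~\ref{lem:two-cut-reduction}, and if $n$ is even then $\sum_i\lfloor(|C_i|-1)/2\rfloor\le\lfloor(n-1-2)/2\rfloor=n/2-2=\lfloor(n-1)/2\rfloor-1$. Since the proof of Theorem~\ref{thm:K4-main} already asserts exactly this step, no real obstacle remains. Combining the result with Theorem~\ref{thm:K4-main} and Lemma~\ref{lem:Tm-HL-index} then yields Theorem~\ref{thm:intro-K4}.
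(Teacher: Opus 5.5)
Your proposal is correct and matches the paper's proof essentially step for step. Both arguments take a minimal counterexample and use Lemma~\ref{lem:general-deletion} with $S=\varnothing$ and $S=\{v\}$, together with $R(K_1)=0$ and $R(K_2)=1$, to force $2$-connectivity. Both then invoke Lemma~\ref{lem:K23-block-structure} and finish with Theorem~\ref{thm:K4-main} in the outerplanar case and with $\Spec(K_4)=\{3,-1,-1,-1\}$ in the $K_4$ case. Your explicit check of the floor inequality in the cut-vertex step is a detail the paper leaves implicit.
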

\begin{proof}
Suppose, to the contrary, that the assertion is
false, and choose a counterexample $G$ of minimum order.

By minimality and Lemma~\ref{lem:general-deletion}, apply first with
$S=\varnothing$ and then with $S=\{v\}$ for a cut vertex $v$ of $G$. Then the graph $G$ is connected
and has no cut vertex. Since $R(K_1)=0$ and $R(K_2)=1<\sqrt5-1$, we have
$|V(G)|\geq3$. Hence $G$ is $2$-connected and consists of a single
block.

By Lemma~\ref{lem:K23-block-structure}, either $G$ is outerplanar or
$G\cong K_4$. In the first case, $G$ is $K_4$-minor-free, and
Theorem~\ref{thm:K4-main} gives
$R(G)\leq\sqrt5-1,$
a contradiction. In the second case,
$\Spec(K_4)=\{3,-1,-1,-1\},$
so $R(K_4)=1<\sqrt5-1$, again a contradiction. Therefore,
$R(G)\leq\sqrt5-1.$
\end{proof}

\medskip

\begin{proof}
    [Proof of Theorem \ref{thm:intro-K4}]
    It follows from Theorem \ref{thm:K4-main}, Theorem \ref{thm:K23-main}, and Lemma \ref{lem:Tm-HL-index}.
\end{proof}

\section{The proof of Theorem~\ref{thm:intro-mad-subquartic}}
\label{sec:maximum-average-degree}
In order to prove Theorem \ref{thm:intro-mad-subquartic}, we first present the following theorem and lemmas.

\begin{theorem}[Acharya, Jiang, and Zhang
{\cite[Corollary~6.3]{AcharyaJiangZhang2026}}]
\label{lem:HL-average-degree}
For every graph $G$ with average degree $\overline d(G)$, 
$R(G)\leq\sqrt{\overline d(G)}$.
\end{theorem}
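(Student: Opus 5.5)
The plan is a direct trace argument: the sum of the squared eigenvalues is fixed by the number of edges, and the zero trace of $A(G)$ forces large median eigenvalues to produce too much squared mass. Write $n=|G|$ and $b=\sqrt{\overline d(G)}$. Recall that
\[
\sum_{i=1}^n\lambda_i=\operatorname{tr}A(G)=0,
\qquad
\sum_{i=1}^n\lambda_i^2=\operatorname{tr}A(G)^2=2|E(G)|=n b^2 .
\]
If $G$ has no edges, all eigenvalues are $0$ and there is nothing to prove. So assume $b>0$ and argue by contradiction. By definition of $R(G)$, it suffices to rule out separately (a) $\lambda_h>b$ and (b) $\lambda_\ell<-b$.

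For (a), suppose $\lambda_h>b$. Then $\lambda_1,\dots,\lambda_h>b$, so these $h$ eigenvalues contribute more than $hb^2$ to $\sum\lambda_i^2$. Their sum $P$ satisfies $P>hb$. Since the trace is zero, the remaining $n-h$ eigenvalues have sum $-P$. By Cauchy--Schwarz, the sum of their squares is at least $P^2/(n-h)>h^2b^2/(n-h)$. (If $n-h=0$, i.e.\ $n=1$, the graph has no edges and this case was already excluded.) Adding the two contributions gives
\[
nb^2=\sum_i\lambda_i^2> hb^2+\frac{h^2b^2}{n-h}=\frac{hn}{n-h}\,b^2\geq nb^2 .
\]
The last inequality uses $h=\lfloor (n+1)/2\rfloor\geq n-h$. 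This is a contradiction.

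For (b), suppose $\lambda_\ell<-b$. Then $\lambda_\ell,\dots,\lambda_n<-b$, which is $n-\ell+1=h$ eigenvalues. Applying the same computation to $-A(G)$, equivalently with the roles of positive and negative eigenvalues exchanged, gives the identical contradiction. Hence $|\lambda_h|,|\lambda_\ell|\le b$, and $R(G)\leq\sqrt{\overline d(G)}$.

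I do not expect a real obstacle. The only points to check are the strict inequalities, which are what produce the contradiction, the degenerate cases $n\le 2$ or an edgeless $G$, and the index identity $n-\ell+1=h$. The argument does not need the lower bound on $\lambda_h$ or the upper bound on $\lambda_\ell$. For instance, if $\lambda_h$ is very negative then $|\lambda_h|$ large forces $\lambda_\ell\le\lambda_h<-b$, which is case (b); the case $\lambda_\ell>b$ similarly falls under case (a).
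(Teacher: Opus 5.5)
Your proof is correct: the strict inequalities, the degenerate cases, the identity $n-\ell+1=h$, and the final step $-b\le\lambda_\ell\le\lambda_h\le b$ are all handled properly. The paper does not prove this statement at all. It imports it as Corollary~6.3 of Acharya, Jiang, and Zhang, so your two-moment argument is a self-contained substitute for a citation rather than an alternative to an in-paper proof.

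It fits naturally with the method the paper does use in Section~5. Your Cauchy--Schwarz step is equivalent to testing the spectrum against the polynomial $(x+b)^2$. The relations $\operatorname{tr}A=0$ and $\operatorname{tr}A^2=nb^2$ give $\sum_i(\lambda_i+b)^2=2nb^2$. On the other hand, $\lambda_h>b$ would force $\sum_{i\le h}(\lambda_i+b)^2>4hb^2\ge 2nb^2$. Because $(x+b)^2\ge 0$ on all of $\mathbb{R}$, no degree hypothesis is needed.

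The quartic $f(x)=(r-x)(x+a)^2(x+d)$ of Section~5 is the four-moment version of the same idea. It is nonnegative only on $[-d,d]$, so it needs $\Delta(G)\le d$ to confine the spectrum, together with a degree-based lower bound on $\operatorname{tr}A^4$.

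Your computation also gives slightly more than stated, namely $\lambda_h\le\sqrt{\overline d(G)\,(n-h)/h}$, which is sharper for odd $n$. I believe this is the $k=h$ case of the classical Wolkowicz--Styan trace bound for $\lambda_k$.
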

\begin{lemma}
\label{lem:small-order-mad}
If $|V(G)|\leq6$, $\Delta(G)\leq4$, and
$\mad(G)\leq8/3$, then $R(G)\leq\sqrt2$.
\end{lemma}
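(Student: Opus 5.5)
Our plan is to reduce to connected graphs and then use the average-degree bound of Theorem~\ref{lem:HL-average-degree} wherever possible, treating the remaining small graphs by hand.

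\textbf{Reduction to connected graphs.} By Lemma~\ref{lem:general-deletion} with $S=\varnothing$, it suffices that every component $C$ of $G$ satisfies $R(C)\leq\sqrt2$. Each component again has order at most $6$, maximum degree at most $4$, and $\mad(C)\leq 8/3$. So we may assume $G$ is connected.

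\textbf{Graphs of small average degree.} Since $\overline d(G)\leq\mad(G)\leq 8/3$, Theorem~\ref{lem:HL-average-degree} gives $R(G)\leq\sqrt{8/3}$. This is not yet enough. However, whenever $\overline d(G)\leq 2$, that is $|E(G)|\leq |V(G)|$, the same theorem gives $R(G)\leq\sqrt2$ directly. This covers all trees and all unicyclic connected graphs.

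\textbf{The remaining graphs.} Left over are connected graphs with $n\leq 6$ and $n<|E(G)|\leq 4n/3$. For $n\leq 3$ we have $|E|\leq 3\leq n$, so nothing remains. The other cases are:
\begin{itemize}
\item $n=4$: $|E|=5$, which gives $K_4-e$. But $K_4-e$ has average degree $5/2\leq 8/3$ and every subgraph is fine, so it must be handled. Its spectrum is $\{(1\pm\sqrt{17})/2,0,-1\}$, so the median eigenvalues are $0$ and $-1$, and $R=1$.
\item $n=5$: $|E|=6$, i.e.\ bicyclic graphs on $5$ vertices. The $\mad$ condition forbids a $K_4-e$ subgraph only if $5/2>8/3$, which is false, so all of them must be checked.
\item $n=6$: $|E|\in\{7,8\}$. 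Here the $\mad$ condition excludes graphs containing $K_4$, since $\mad(K_4)=3$, and $K_4-e$ plus pendant structures remain allowed.
\end{itemize}

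To handle $n=5,6$ uniformly, we use Lemma~\ref{lem:count-characterization}. We need $n_{\sqrt2}^\pm(G)\leq\lfloor (n-1)/2\rfloor$, which is $2$ for both $n=5$ and $n=6$. By Lemma~\ref{lem:count-interlacing}, it suffices to find one vertex $v$ with $n_{\sqrt2}^\pm(G-v)\leq 1$.

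\emph{Positive side.} A connected graph on at most $5$ vertices has at most one eigenvalue $>\sqrt2$ unless it contains two vertex-disjoint structures of spectral radius $>\sqrt2$, namely $K_3$ or $P_4$ or $K_{1,3}$ paired with a disjoint copy of another. This is impossible on $\leq 5$ vertices, except for the sum of eigenvalue considerations, which we would argue via $\lambda_2$ bounds. Since $\operatorname{tr}A=0$ and $\sum\lambda_i^2=2|E|\leq 16$, two eigenvalues above $\sqrt2$ plus the largest would force $\lambda_1^2+2\cdot 2<2|E|$, and this needs checking.

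\emph{Negative side.} Having two eigenvalues $<-\sqrt2$ requires $\sum\lambda_i^2>4+\lambda_1^2$. Combined with $\lambda_1\geq\overline d$, this gives a cheap test. The cases where these counts do not settle the matter will be resolved by computing characteristic polynomials directly. Choosing $v$ of maximum degree usually leaves a forest or unicyclic graph on $\leq5$ vertices, whose spectra are easy.

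\textbf{Main obstacle.} The main obstacle is the finite but nontrivial enumeration for $n=6$ with $7$ or $8$ edges. We would first try to avoid full enumeration by the trace argument: $\sum\lambda_i^2=2|E|\leq16$ and $\lambda_1\geq 2|E|/n\geq 7/3$ give $\sum_{i\geq2}\lambda_i^2\leq 16-49/9<9$. Three further eigenvalues of absolute value $>\sqrt2$ on the negative side, or two more on the positive side together with one negative one (note $\sum\lambda_i=0$ forces substantial negative mass), would need more than $6$. This leaves a small margin, and we expect only a handful of graphs, e.g.\ $K_4-e$ with two pendant vertices, or $K_{2,3}$ plus a pendant vertex, to require explicit characteristic polynomials.
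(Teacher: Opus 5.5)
Your preliminary reductions are sound: passing to components via Lemma~\ref{lem:general-deletion} with $S=\varnothing$, and disposing of all graphs with $|E(G)|\le|V(G)|$ via Theorem~\ref{lem:HL-average-degree}. However, everything after that is a plan for a case check that you never carry out, and the tools you propose do not close it, so the lemma is not proved.

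\emph{Positive side.} Your criterion (``at most one eigenvalue above $\sqrt2$ unless there are two vertex-disjoint subgraphs of spectral radius $>\sqrt2$'') is not an available theorem, and you leave its fallback as ``needs checking''.

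\emph{Negative side.} The trace test cannot rule out three eigenvalues below $-\sqrt2$ when $n=6$. It gives only $\sum_{i\ge2}\lambda_i^2\le 2|E|-(2|E|/6)^2<9$, against the $>6$ you need. (Your displayed $16-49/9$ is about $10.6$, not below $9$.) Adding $\sum_i\lambda_i=0$ still leaves room.

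\emph{Vertex deletion.} The one-vertex deletion is not automatic either. Let $G$ be $C_5$ plus a vertex $v$ joined to three cycle vertices. Then $v$ has maximum degree, yet $G-v\cong C_5$ has two eigenvalues $2\cos(4\pi/5)\approx-1.618<-\sqrt2$. So this choice of $v$ yields only $n^-_{\sqrt2}(G)\le 3$.

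What remains is an unexecuted enumeration of all admissible connected graphs with $(n,|E|)\in\{(5,6),(6,7),(6,8)\}$, with an unspecified list of characteristic polynomials.

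The missing idea is to use Theorem~\ref{thm:K4-main}, already established in the paper: every $K_4$-minor-free graph has $R(G)\le\sqrt5-1<\sqrt2$. Suppose instead $G$ has a $K_4$-minor.
\begin{itemize}
\item Since $K_4$ is subcubic, $G$ contains a subdivision $T$ of $K_4$, and $|E(T)|=|V(T)|+2$.
\item Writing $r=|V(T)|$, the bound $2(r+2)/r\le\mad(G)\le 8/3$ forces $r\ge 6$, so $|V(G)|=r=6$.
\item Then $|E(G)|\le 4\cdot 6/3=8=|E(T)|$, which gives $G=T$.
\end{itemize}
Thus only three graphs need explicit characteristic polynomials: $K_4$ with two subdivision vertices on one edge, on two adjacent edges, or on two independent edges. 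These give $n^\pm_{\sqrt2}(G)\le 2$ in each case.

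In particular, every leftover graph with $n\le 5$, and every one with $n=6$ and $|E|=7$, is automatically $K_4$-minor-free and needs no separate treatment. Your troublesome example ($C_5$ plus a vertex of degree $3$) is in fact one of the three exceptional $K_4$-subdivisions.
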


\begin{proof}
The assertion follows from Theorem~\ref{thm:K4-main},   if $G$ is
$K_4$-minor-free. Otherwise, since $K_4$ is subcubic, $G$ contains
a subdivision $T$ of $K_4$. Writing $r=|V(T)|$, we have
$|E(T)|=r+2$, and hence
$\frac{2(r+2)}r\leq\mad(G)\leq\frac83.$ 
Thus $r\geq6$. It follows that $|V(G)|=r=6$, while
$|E(T)|=8$ and $|E(G)|\leq4|V(G)|/3=8$. Therefore $G=T$.

Hence $G$ is obtained from $K_4$ by inserting two subdivision
vertices. Up to isomorphism, there are three possibilities. Direct
calculation gives
\[
\begin{array}{c|c|c}
\text{subdivided edges}&\chi_G(x)&
(n_{\sqrt2}^+(G),n_{\sqrt2}^-(G))\\ \hline
\text{the same edge twice}
&(x-1)(x+1)(x^2-x-5)(x^2+x-1)&(1,2)\\
\text{two adjacent edges}
&(x^2+x-1)(x^4-x^3-6x^2+3x+1)&(1,2)\\
\text{two independent edges}
&x^2(x^2-2x-2)(x^2+2x-2)&(1,1)
\end{array}
\]
For the factor  $p(x)=x^4-x^3-6x^2+3x+1$ in the second row,
$p(-3),p(-2),\ldots,p(3)=46,-5,-6,1,-2,-9,10;$
hence its four roots lie in $(-3,-2)$, $(-1,0)$, $(0,1)$, and
$(2,3)$. Thus in every case
\[
n_{\sqrt2}^\pm(G)\leq2
=\left\lfloor\frac{|G|-1}{2}\right\rfloor,
\]
and the result follows from
Lemma~\ref{lem:count-characterization}.
\end{proof}

\begin{lemma}
\label{lem:basic-critical-reductions}
Let $G$ be a  graph of the minimum order that satisfies 
$\Delta(G)\leq4$, $\mad(G)\leq8/3$, and $R(G)>\sqrt2$.
If $|V(G)|\geq7$, then $G$ is $2$-connected, no two
$2$-vertices are adjacent, and every $3$-vertex has at most one
$2$-neighbor.
\end{lemma}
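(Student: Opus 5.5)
The plan is to use minimality together with Lemma~\ref{lem:general-deletion} and Lemma~\ref{lem:count-interlacing}. The key observation is that both hypotheses $\Delta\le 4$ and $\mad\le 8/3$ pass to every subgraph. Hence, by the minimality of $G$, every graph of smaller order obtained from $G$ by deleting vertices, and in particular every component of $G-S$ for nonempty $S$, has HL-index at most $\sqrt2$. Throughout, $n=|V(G)|\ge 7$.

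\textbf{2-connectivity.} Applying Lemma~\ref{lem:general-deletion} with $S=\varnothing$ shows that $G$ is connected, since $\sum\lfloor(|C_i|-1)/2\rfloor\le\lfloor(n-1)/2\rfloor$ always holds. If $v$ were a cut vertex, take $S=\{v\}$, so that $G-v$ has $k\ge2$ components. Then $\sum\lfloor(m_i-1)/2\rfloor\le (n-1-k)/2\le (n-3)/2$. Since the left side is an integer, it is at most $\lfloor (n-3)/2\rfloor=\lfloor(n-1)/2\rfloor-1$, and Lemma~\ref{lem:general-deletion} gives $R(G)\le\sqrt2$, a contradiction. As $n\ge3$, $G$ is $2$-connected.

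\textbf{No two adjacent 2-vertices.} Suppose $u,v$ are adjacent $2$-vertices, and let $u',v'$ be their other neighbours. If $u'=v'$, then $u'$ separates $\{u,v\}$ from the remaining $n-3\ge1$ vertices. It would be a cut vertex, which is impossible. So take $S=\{u',v'\}$. Then $G-S$ has the component $\{u,v\}\cong K_2$, contributing $0$, and the other components have total order $n-4$. Their contribution is therefore at most $\lfloor (n-5)/2\rfloor=\lfloor(n-1)/2\rfloor-2$. Each component has HL-index at most $\sqrt2$ (for $K_2$ it is $1$), so Lemma~\ref{lem:general-deletion} yields a contradiction.

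\textbf{No 3-vertex with two 2-neighbours.} Suppose a $3$-vertex $w$ has $2$-neighbours $a,b$ and third neighbour $c$. Let $a',b'$ be the other neighbours of $a,b$. Here Lemma~\ref{lem:general-deletion} is slightly too weak, because the component $a\,w\,b\cong P_3$ would be charged $\lfloor 2/2\rfloor=1$. Instead I will count directly: $\Spec(P_3)=\{\sqrt2,0,-\sqrt2\}$, so $n_{\sqrt2}^{\pm}(P_3)=0$. This saving of one is the crux and the only delicate point.

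In the generic case $a',b',c$ are distinct; put $S=\{a',b',c\}$. Then $G-S$ consists of $P_3$ together with components of total order $n-6$. Using minimality and Lemma~\ref{lem:count-characterization} on those components,
\[
n_{\sqrt2}^\pm(G-S)\le 0+\left\lfloor\tfrac{n-7}{2}\right\rfloor=\left\lfloor\tfrac{n-1}{2}\right\rfloor-3.
\]
Lemma~\ref{lem:count-interlacing} then gives $n_{\sqrt2}^\pm(G)\le\lfloor(n-1)/2\rfloor$, and Lemma~\ref{lem:count-characterization} gives $R(G)\le\sqrt2$, a contradiction.

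In the degenerate cases we have $a'=b'$, or $c\in\{a',b'\}$; note $a'\ne b$ and $b'\ne a$ by the previous step. In each of these cases, deleting the set $S=\{a',b',c\}$, which now has only two elements, again isolates $\{a,w,b\}$ as a $P_3$. The rest has order $n-5$, contributing at most $\lfloor(n-6)/2\rfloor\le\lfloor(n-1)/2\rfloor-2$. The same computation closes these cases.

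The main thing to verify carefully is the degenerate coincidences and that $P_3$ is genuinely a whole component of $G-S$. The latter holds because $a,b$ have degree $2$ and $w$ has degree $3$, with all their outside neighbours lying in $S$.
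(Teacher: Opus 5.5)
Your proof is correct and is essentially the paper's argument: minimality plus Lemma~\ref{lem:general-deletion} gives $2$-connectivity, and then you delete the outer neighbours of the $P_2$ or $P_3$ and use that these paths have no eigenvalue outside $[-\sqrt2,\sqrt2]$, together with Lemmas~\ref{lem:count-characterization} and~\ref{lem:count-interlacing}. The only difference is that the paper treats all coincidences at once via $s+\lfloor (n-s-4)/2\rfloor\le\lfloor (n-1)/2\rfloor$ for $s=|S|\le 3$; this also covers the case $a'=b'=c$, where $|S|=1$ rather than $2$ as you state, although your $2$-connectivity step already excludes that case.
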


\begin{proof}
Let $n=|V(G)|$ and $a=\sqrt2$. Note that for every subgraph $H$ of $G$, 
$\Delta(H)\leq4$ and $\mad(H)\leq8/3$. Thus by the minimality of $|V(G)|$, we have $R(H)\le a$ for every proper subgraph $H$ of $G$. 
If $G$ were
disconnected, or if it had a cut vertex, minimality together with
Lemma~\ref{lem:general-deletion} would give $R(G)\leq a$, a contradiction. Thus $G$
is connected and has no cut vertex. Since $n\geq7$, the graph $G$
is $2$-connected and the minimum degree of $G$ is at least $2$.

Suppose that $u$ and $v$ are adjacent $2$-vertices. Let $S$ be the
set of their other neighbors and let $s=|S|\leq2$. Then
$G-S=P_2\cup H$, where $|H|=n-s-2$ and $H$ is nonempty. By
minimality, $R(H)\leq a$. Since
$\Spec(P_2)=\{1,-1\}\subseteq[-a,a]$, the graph $P_2$ contributes
no eigenvalue to $n_a^\pm(G-S)$. Lemmas
\ref{lem:count-characterization} and
\ref{lem:count-interlacing} therefore give
\[
n_a^\pm(G)
\leq s+\left\lfloor\frac{n-s-3}{2}\right\rfloor
\leq\left\lfloor\frac{n-1}{2}\right\rfloor,
\]
contrary to $R(G)>a$. Hence no two $2$-vertices are adjacent.

Now suppose that a $3$-vertex $v$ has two $2$-neighbors
$u_1,u_2$. These vertices are non-adjacent by the preceding
paragraph. Let $S$ consist of the other neighbors of $u_1,u_2$ and
the third neighbor of $v$, and let $s=|S|\leq3$. Then
$G-S=P_3\cup H$, where $|H|=n-s-3\geq1$. Again $R(H)\leq a$ by
minimality. Since
$\Spec(P_3)=\{\sqrt2,0,-\sqrt2\},$ 
$P_3$ contributes no eigenvalue to $n_a^\pm(G-S)$. Therefore,
\[
n_a^\pm(G)
\leq s+\left\lfloor\frac{n-s-4}{2}\right\rfloor
\leq\left\lfloor\frac{n-1}{2}\right\rfloor,
\]
again contradicting Lemma~\ref{lem:count-characterization}.
\end{proof}

\begin{proof}[proof of Theorem \ref{thm:intro-mad-subquartic}.]
Set $a=\sqrt2$ and suppose that $G$ is a counterexample of minimum
order. Let $n=|V(G)|$. By Lemma~\ref{lem:small-order-mad},
$n\geq7$, so we apply Lemma~\ref{lem:basic-critical-reductions}; hence we have that $G$ is $2$-connected, no two
$2$-vertices are adjacent, and every $3$-vertex has at most one
$2$-neighbor.

For $i\in\{2,3,4\}$, let $V_i$ be the set of $i$-vertices
(i.e., vertices of degree $i$) and let
$n_i=|V_i|$. Since the minimum degree of $G$ is at least $2$, we have
$n=n_2+n_3+n_4$. Moreover,
$
2n_2=e(V_2,V_3)+e(V_2,V_4)\leq n_3+4n_4.
$
Consequently,
\[
\begin{aligned}
3\sum_{v\in V(G)}d_G(v)-8n
&=3(2n_2+3n_3+4n_4)-8(n_2+n_3+n_4)\\
&=-2n_2+n_3+4n_4\\
&\geq0.
\end{aligned}
\]
Thus
$\overline d(G)
=\frac{1}{n}\sum_{v\in V(G)}d_G(v)
\geq\frac83.
$

Since
$\overline d(G)\leq\mad(G)\leq\frac83,$
we have equality throughout. In particular,
$2n_2=n_3+4n_4.$
Furthermore,
$2n_2=e(V_2,V_3)+e(V_2,V_4),$
where
$
e(V_2,V_3)\leq n_3,
e(V_2,V_4)\leq4n_4.
$
Since equality holds in their sum, equality holds in both
inequalities. Hence every $3$-vertex has exactly one $2$-neighbor,
and every $4$-vertex has exactly four $2$-neighbors.
Therefore no $4$-vertex is adjacent to a vertex of
$V_3\cup V_4$, while every $3$-vertex has exactly two neighbors in
$V_3$. It follows that $G[V_3]$ is a disjoint union of cycles.
Moreover,
$n_3=2n_2-4n_4$
is even, so we may write $n_3=2r$. 

Let $q$ be the number of odd cycles in
$G[V_3]$. Since the total number $n_3$ of vertices on these cycles
is even, $q$ is even. Choose a minimum vertex cover on each cycle,
let $X$ be their union. Then
$|X|=\frac{n_3+q}{2}=r+\frac q2.$

The set $V_3\setminus X$ is independent. Every vertex in
$V_3\setminus X$ has exactly one neighbor in $V_2$, every vertex of
$V_2$ has at most two neighbors in $V_3\setminus X$, and $V_2$ is
independent. Let $S=X\cup V_4$. Hence every component of $G-S$ is isomorphic to
$K_1$, $P_2$, or $P_3$. Since
$\Spec(P_2)=\{1,-1\},
\Spec(P_3)=\{\sqrt2,0,-\sqrt2\},$ every eigenvalue of $G-S$ belongs to $[-a,a]$. Thus
$n_a^+(G-S)=n_a^-(G-S)=0.$

From $2n_2=n_3+4n_4$, we have $n_2=r+2n_4$. Setting
$t=r+n_4$ gives
$n=3t$ and $ |S|=t+\frac q2.$ 
Every odd cycle has length at least three, so
$3q\leq n_3=2r\leq2t$. Hence $q<t$. Since $q$ is even, we have
$q\leq t-1$ when $t$ is odd and $q\leq t-2$ when $t$ is even.
Therefore,
$|S|\leq\left\lfloor\frac{3t-1}{2}\right\rfloor
=\left\lfloor\frac{n-1}{2}\right\rfloor.$
Lemma~\ref{lem:count-interlacing} now gives
\[n_a^\pm(G)
\leq n_a^\pm(G-S)+|S|
\leq\left\lfloor\frac{n-1}{2}\right\rfloor,
\]
contradicting Lemma~\ref{lem:count-characterization}.
\end{proof}

\section{The proof of Theorem~\ref{thm:intro-average-degree}}
\label{sec:average-degree}

Following the approach used in the proof of \cite[Theorem~1.1]{AcharyaJiangZhang2026}, we construct a suitable  polynomial and employ the first four spectral moments to prove Theorem \ref{thm:intro-average-degree}.
\begin{proof}[proof of Theorem \ref{thm:intro-average-degree}.]
Let $n=|V(G)|$, $m=|E(G)|$, and $a=\sqrt{d-2}$. Consider the polynomial $f$ defined by
\[
f(x)=(r-x)(x+a)^2(x+d),
\text{ where }
r=d+\frac{2a^2}{d+2a}.
\]
Since $r-d=2a^2/(d+2a)$ and
$r-a=d(d+a)/(d+2a)$, direct substitution gives
$f(a)=f(d)=M$, where
\[M=\frac{4da^2(d+a)^2}{d+2a}.\]
Moreover, direct factorization yields
\[
f(x)-M=(d-x)(x-a)Q(x), 
\text{ where }
Q(x)=x^2+\frac{d^2+5ad+4a^2}{d+2a}x
+\frac{a(3d^2+6ad+2a^2)}{d+2a}.
\]
All coefficients of $Q$ are positive, and hence $Q(x)>0$ for
$x\geq a$. Since $r>d$, the three factors $r-x$, $(x+a)^2$, and
$x+d$ are nonnegative on $[-d,d]$. Consequently, $f(x)\geq0$ on
$[-d,d]$, while $f(x)\geq M$ for $a<x\leq d$, with strict
inequality for $a<x<d$.

\medskip

We divide the rest of the proof into the following two parts.

(i) We first assume that $n=2h$. 
Let $A$ be the adjacency matrix of $G$, with eigenvalues
$\lambda_1\geq\cdots\geq\lambda_n$. Since $\Delta(G)\leq d$, all
the eigenvalues lie in $[-d,d]$. Expanding $f$ and using
$a^2=d-2$, we obtain
\[
f(x)=-x^4-\frac{2a(a+d)}{d+2a}x^3
 +(d-1)(d+2)x^2
 +\frac{2a(a+d)(a^2+ad+d^2)}{d+2a}x+da^2r.
\]
Let $t(G)$ denote the number of triangles in $G$. 
Since $\operatorname{tr}(A)=0$,
$\operatorname{tr}(A^2)=2m$, and
$\operatorname{tr}(A^3)=6t(G)\geq0$, it follows that
\[
\sum_{i=1}^n f(\lambda_i)
\leq-\operatorname{tr}(A^4)
+2(d-1)(d+2)m+da^2rn.
\]

Let $c_4(G)$ denote the number of $4$-cycles in $G$. By the classical fourth spectral moment formula (see \cite[pp.~86--87]{GutmanDas2004}), we have
\[\operatorname{tr}(A^4)
=2\sum_{v\in V(G)}d_G(v)^2-2m+8c_4(G).
\]
For every integer $0\leq k\leq d$, the inequality
$(k-d+2)(k-d+1)\geq0$ gives
$k^2\geq(2d-3)k-(d-2)(d-1)$. Applying this inequality to all
vertex degrees and discarding the nonnegative term $8c_4(G)$,
we obtain
\[
\operatorname{tr}(A^4)
\geq(8d-14)m-2(d-2)(d-1)n.
\]
Consequently,
\[
\sum_{i=1}^n f(\lambda_i)
\leq2(d^2-3d+5)m
+a^2\bigl(dr+2(d-1)\bigr)n.
\]

Recall that for every integer $d\geq3$, $\tau_d$ is defined by 
\[
\tau_d:=\frac{(d-2)(d^2-2d+2)}{d^2-3d+5}.
\]
Since $2m/n=\overline d(G)\leq\tau_d$ and $a^2=d-2$, we have
$2(d^2-3d+5)m\leq a^2(d^2-2d+2)n$. Moreover,
$d+r=2(d+a)^2/(d+2a)$. Therefore,
\[
\sum_{i=1}^n f(\lambda_i)
\leq a^2d(d+r)n
=\frac{2da^2(d+a)^2}{d+2a}n
=\frac{nM}{2}=hM.
\]

Suppose that $\lambda_h(G)>a$. Then at least $h$ eigenvalues lie in $(a,d]$, and each contributes at least $M$ to the preceding sum $\sum_{i=1}^n f(\lambda_i)$. 
Moreover, the multiplicity of $d$ is less than $h$. 
Indeed, every connected component having $d$ as an eigenvalue is $d$-regular, has at least $d+1$ vertices. and contributes $d$ with multiplicity one, by the Perron-Frobenius theorem. 
Thus the total multiplicity of $d$ is at most $n/(d+1)<n/2=h$. Hence at least one of the first $h$
eigenvalues lies in $(a,d)$ and contributes strictly more than
$M$. Since $f$ is nonnegative on $[-d,d]$, this contradicts
$\sum_i f(\lambda_i)\leq hM$. 
Therefore, $\lambda_h(G)\leq a$.

Now suppose that $n=2h-1$ and set $G'=G\cup K_1$. Then
$|V(G')|=2h$, $\Delta(G')\leq d$, and
$\overline d(G')\leq\overline d(G)\leq\tau_d$. 
Thus by the preceding argument, $\lambda_h(G')\leq a$.  
Since $G$ is obtained from $G'$ by
deleting the isolated vertex, Theorem \ref{thm:interlacing} (the eigenvalue interlacing theorem) gives
$\lambda_h(G)\leq\lambda_h(G')\leq a$.

\medskip

(ii) Suppose that $G$ is additionally triangle-free. Then
$\operatorname{tr}(A^3)=0$. First assume that $n$ is even. Applying the same calculation as in (i) to
$-\lambda_1,\ldots,-\lambda_n$ gives
$\sum_i f(-\lambda_i)\leq hM$. The multiplicity of $-d$ is also
less than $h$, since every connected component having $-d$ as an
eigenvalue is $d$-regular and bipartite, has at least $d+1$
vertices, and contributes $-d$ with multiplicity one, by the Perron-Frobenius theorem. Thus
$\lambda_\ell(G)<-a$ would imply that at least $h$ of the numbers
$-\lambda_i$ lie in $(a,d]$, with at least one lying in $(a,d)$,
again contradicting $\sum_i f(-\lambda_i)\leq hM$. 
Hence, $\lambda_\ell(G)\geq-a$, and therefore $R(G)\leq a$ when $n$ is even.

Finally, suppose that $n=2h-1=2\ell-1$ and also set $G'=G\cup K_1$. Then
$|V(G')|=2h$, $\Delta(G')\leq d$, and 
$\overline d(G')\leq\overline d(G)\leq\tau_d$.
Moreover, $G'$ is triangle-free, since $G$ is triangle-free. 
Thus by the preceding argument, $R(G')\leq a$. 
Again Theorem \ref{thm:interlacing} (the eigenvalue interlacing theorem) gives
\[a\geq
\lambda_h(G')\geq\lambda_h(G)=\lambda_\ell(G)
\geq\lambda_{h+1}(G')\geq -a.
\]
Thus, $R(G)\leq a$, which completes the proof.
\end{proof}

We derive the following corollary by applying Theorem~\ref{thm:intro-average-degree} with $d=4$, noting that $\tau_4=20/9$.
\begin{corollary}
\label{cor:subquartic-small-average-degree}
Let $G$ be a simple graph satisfying $\Delta(G)\leq4$ and
$\overline d(G)\leq20/9$. Then we have
$\lambda_h(G)\leq\sqrt2$. Moreover, if $G$ is triangle-free, then
$R(G)\leq\sqrt2$.
\end{corollary}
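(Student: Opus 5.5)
This is a direct specialization of Theorem~\ref{thm:intro-average-degree} to $d=4$; essentially nothing new needs proving beyond checking the threshold. I will set $d=4$ in the definition of $\tau_d$ and compute
\[
\tau_4=\frac{(4-2)(16-8+2)}{16-12+5}=\frac{2\cdot 10}{9}=\frac{20}{9},
\]
and $\sqrt{d-2}=\sqrt2$. The hypothesis $d\geq3$ of Theorem~\ref{thm:intro-average-degree} holds.

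Given $G$ with $\Delta(G)\leq4$ and $\overline d(G)\leq 20/9=\tau_4$, both hypotheses of Theorem~\ref{thm:intro-average-degree} are satisfied with $d=4$.

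\textbf{First assertion.} Part (i) of the theorem yields $\lambda_h(G)\leq\sqrt{4-2}=\sqrt2$.

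\textbf{Second assertion.} If $G$ is additionally triangle-free, part (ii) yields $R(G)\leq\sqrt2$.

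The only potential obstacle is confirming that the arithmetic for $\tau_4$ is right and that no hidden extra hypothesis (e.g.\ $\Delta(G)=d$ exactly rather than at most $d$) is needed. Theorem~\ref{thm:intro-average-degree} only uses $\Delta(G)\leq d$, since the spectrum then lies in $[-d,d]$, so the corollary follows immediately.
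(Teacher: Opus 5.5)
Your proposal is correct and is exactly the paper's argument: the corollary is obtained by applying Theorem~\ref{thm:intro-average-degree} with $d=4$, where $\tau_4=\frac{2\cdot 10}{9}=\frac{20}{9}$ and $\sqrt{d-2}=\sqrt2$. Part (i) of that theorem gives $\lambda_h(G)\leq\sqrt2$, and part (ii) gives $R(G)\leq\sqrt2$ when $G$ is triangle-free.
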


\begin{figure}[htbp]
\centering
\begin{tikzpicture}
\begin{axis}[
    width=10.5cm,
    height=6.5cm,
    xlabel={$d$},
    ylabel={$\tau_d$},
    xmin=0,
    xmax=15,
    ymin=0,
    ymax=13,
    domain=3:15,
    samples=300,
    axis lines=left,
    grid=both,
    major grid style={gray!30},
    minor grid style={gray!15},
    xtick={0,1,2,3,4,5,6,7,8,9,10,11,12,13,14,15},
    ytick={0,2,4,6,8,10,12}
]
\addplot[thick]
{((x-2)*(x^2-2*x+2))/(x^2-3*x+5)};
\end{axis}
\end{tikzpicture}
\caption{The dependence of
$\tau_d=\dfrac{(d-2)(d^2-2d+2)}{d^2-3d+5}$ on $d$ for integers
$d\geq3$.}
\label{fig:tau-d}
\end{figure}

\begin{remark}
The dependence of the threshold $\tau_d$ on $d$ is illustrated in
Figure.~\ref{fig:tau-d}. The figure shows that the threshold is close to
$d-1$ for large $d$. In fact, it admits the following asymptotic
expansion:
\[
\tau_d=d-1-\frac{2d-1}{d^2-3d+5}
=d-1-\frac2d+O\left(\frac1{d^2}\right)
\]
as $d\to\infty$.
\end{remark}

\bibliographystyle{plainnat}
\bibliography{references_alphabetical}
\end{document}